\documentclass[10pt]{article}

\usepackage[title]{appendix}
\usepackage[margin=1in]{geometry}
\usepackage{enumerate, amsmath, amssymb,latexsym}
\usepackage{amsthm, color, graphicx, hyperref, amscd}
\usepackage{todonotes, comment}
\usepackage{bbm}
\newcommand\nc{\newcommand}
\nc\DMO{\DeclareMathOperator}
\nc\ignore[1]{}     % allows you to comment out part of the text 

\usepackage[normalem]{ulem}

\newcommand{\cP}{\mathcal{P}}

\newcommand{\cL}{\mathcal{L}}

\newcommand{\HH}{\mathcal{H}}

\newcommand{\F}{\mathbb{F}}
\newcommand{\E}{\mathbb{E}}

\newcommand{\PG}{\mathrm{PG}}

\DeclareMathOperator{\pr}{pr}

\newtheorem{thm}{thm}[section]

\newtheorem{conj}[thm]{Conjecture}

\theoremstyle{definition}

\DMO{\Tr}{Tr}        \DMO{\N}{\N}    \DMO{\ind}{ind}
\DMO{\AG}{{AG}}                        \DMO{\GL}{{GL}}
\DMO{\PGL}{{PGL}}                     \DMO{\PGaL}{P\Gamma L}
\DMO{\Gal}{Gal}                          \DMO{\GaL}{\Gamma L}

\nc{\uE}{{\textup E}}                              \nc{\uL}{{\textup L}}
\nc{\uS}{{\textup S}}                              \nc{\Zp}{{\mathbb Z}_p}
\nc{\cR}{{\mathcal R}}                            \nc{\cA}{{\mathcal A}}
\nc{\NN}{{\mathbb N}}
\nc{\cN}{{\mathcal N}}                           \nc{\cM}{{\mathcal M}}
\nc{\cG}{{\mathcal G}}                           \nc{\ts}{{\theta_s}}
\nc{\cE}{{\mathcal E}}                            \nc{\cF}{{\mathcal F}}

\nc{\wa}{{\widehat{\alpha}}}                 \nc{\wg}{{\widehat{\gamma}}}
\nc{\cJ}{{\mathcal J}}                            \nc{\cQ}{{\mathcal Q}}
\nc{\Fqt}{\F_{q^t}}                               \nc{\K}{{\mathbb K}}
\nc{\GF}{\hbox{{\rm GF}}}                    \nc{\st}{{\cal S}_t}
\nc{\so}{{\cal S}}                                  \nc{\sk}{{\cal S}_2}
\nc{\ad}{E}                                            \nc{\lmb}{\lambda}
\nc{\la}{\langle}                                     \nc{\ra}{\rangle}

\nc{\ZG}{\mathcal{Z}(\mathrm{\Gamma L})}
\nc{\G}{\mathrm{\Gamma L}}                \nc{\ep}{\epsilon}
\nc{\vv}{\mathbf v}  
\nc{\1}{\mathbbm 1}  
\nc{\hs}{{\hat{\sigma}}}

\newtheorem{theorem}[thm]{Theorem}
\newtheorem{problem}[thm]{Problem}
\newtheorem{lemma}[thm]{Lemma}
\newtheorem{corollary}[thm]{Corollary}
\newtheorem{construction}[thm]{Construction}

\newtheorem{proposition}[thm]{Proposition}
\newtheorem{result}[thm]{Result}

\newtheorem{defi}[thm]{Definition}

\newtheorem{remark}[thm]{Remark}
\usepackage{xcolor}
\newcommand{\zs}[1]{\textcolor{magenta}{\textbf{#1}}}
\newcommand{\zsst}[1]{%
	\bgroup
	\markoverwith{\textcolor{magenta}{\rule[0.5ex]{2pt}{1.2pt}}}%
	\ULon{#1}%
	\egroup
}

\title{Balanced intersection size distributions in finite geometries and vector spaces}
\author{Zolt\'an L\'or\'ant Nagy\thanks{
		E\"otv\"os Lor\'and University, Budapest, Hungary. The author is supported by the J\'anos Bolyai Scholarship of the Hungarian Academy of Sciences and by the NRDI EXCELLENCE-24 grant no. 151504 Combinatorics and Geometry. %Hungarian Research Grant (NKFIH) No. PD  134953 and no. 124950.  
		E-mail: {\tt zoltan.lorant.nagy@ttk.elte.hu}}
	\and Zsuzsa Weiner \thanks{Prezi.com, H-1065 Budapest, Nagymező utca 54-56, Hungary;  E-mail: {\tt zsuzsa.weiner@gmail.com}
}}
\date{}

\begin{document}
	
	\maketitle

	\begin{abstract}
		We study intersection size distributions arising in finite geometries and vector spaces.
		For a projective plane \(\Pi_q\) of order \(q\) with line set
		\(\mathcal L\), we show that
		\[
		\min_{S\subseteq\Pi_q}\max_k
		|\{\ell\in\mathcal L:|S\cap\ell|=k\}|=\Theta(q^{3/2}).
		\]
		The arguments behind these bounds admit a more general formulation.
		We prove a similar lower bound for balanced incomplete block designs, together
		with probabilistic upper bounds for uniform block systems: one for fixed
		block size under a weak dependency condition and another for growing
		block size under bounded pairwise intersections. This yields corresponding
		results for subspaces of vector spaces over finite fields, equivalently, for affine subspaces of an affine space. 
		%, furthermore for projective subspaces and for inversive planes.   In particular, for the circles of an inversive plane of order \(q\), the minimum possible maximum frequency of an intersection size is \(\Theta(q^{5/2})\). 
		We also study explicit constructions in affine
		planes of prime order and relate their intersection size distributions
		to character sum estimates; an elliptic curve construction gives a bound
		within a polylogarithmic factor of the optimal order. Finally, we relate
		balanced intersection size distributions to legitimate colorings and
		prove that every \(n\)-uniform linear hypergraph with \(n\) edges admits
		a legitimate \(2\)-coloring.
	\end{abstract}

	\section{Introduction}

	A recurring theme in combinatorics and incidence geometry is to understand how a set is distributed
	with respect to a family of test objects, such as lines, subspaces or in general, hyperedges of a hypergraph defined on the ground set. 
	%\zsst{In this paper, we consider finite point sets of finite projective planes and investigate how evenly the sizes of their intersection with the lines of the plane can be distributed.}
	{In this paper, we study how frequently the same intersection size must
		occur in finite geometries and, more generally, in uniform block systems.
		Our motivating problem concerns the intersection sizes of a point set with the lines of a finite projective plane.}
	
	%a variant of \zs{the} \zsst{ this }
	%a question { concerning the distribution of secant sizes of point sets in finite projective planes}. 
	
	% which might be of independent interest and is related to the concept of legitimate colorings (due to Alon and Füredi), to block designs induced by finite geometries and to the famous direction problem as well.
	%Instead of considering two-colorings, we simply consider sets (which might be viewed as the set of blue points) and study extremal problems concerning the distribution of the secant sizes of the sets (with respect to the size of the set, and the order of the field).
	
	Let $S$ be a finite point set in an affine or projective plane with point set $\cP$ and line set $\cL$. For each $k$, let  $L_k$ denote the set of lines  intersecting $S$ in exactly $k$ points, that is, \[
	L_k = L_k(S) := \{\ell\in \cL : |\ell \cap S| = k\}.
	\]
	
	First, we briefly discuss the Euclidean setting for comparison. Since we consider finite point sets, this is equivalent to the projective setting over the reals.
	
	\begin{proposition}\label{prop:melchior_ratio}
		Let $S$ be a finite non-collinear set of points in the real projective plane. 
		For each integer \(k\) with \(2\le k\), let
		us write
		$
		x_k := |L_k|, 
		$
		then
		\[
		\ensuremath{\frac{1}{3}<\max\left\{\frac{x_2}{ \sum_{2\le k} x_k},\frac{x_3}{ \sum_{2\le k} x_k}\right\}}.
		\]
		
	\end{proposition}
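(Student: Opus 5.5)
The plan is to derive the statement directly from Melchior's inequality. That inequality is the classical consequence of Euler's formula applied to the dual line arrangement in the real projective plane. For a finite non-collinear point set $S$ in the real projective plane it states
\[
x_2 \;\ge\; 3+\sum_{k\ge 4}(k-3)\,x_k .
\]
I would quote it as a known result. If a self-contained argument is preferred, I would sketch its derivation: dualize $S$ to an arrangement of lines and count vertices, edges and faces via $V-E+F=1$. Every face has at least $3$ sides, so $2E\ge 3F$. Substituting $V=\sum_k x_k$ and $E=\sum_k k x_k$ then gives exactly the displayed inequality. Non-collinearity is what makes the arrangement a genuine cell decomposition, and it also guarantees that $N:=\sum_{k\ge 2}x_k$ is positive.

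The main step is a short averaging argument by contradiction. Suppose that both $x_2\le N/3$ and $x_3\le N/3$. Then $\sum_{k\ge 4}x_k = N-x_2-x_3\ge N/3$. Since $k-3\ge 1$ for every $k\ge 4$, we get $\sum_{k\ge4}(k-3)x_k\ge \sum_{k\ge 4}x_k\ge N/3$. Melchior's inequality then yields $x_2\ge 3+N/3>N/3$, contradicting the assumption. Hence $\max\{x_2,x_3\}>N/3$, which is the claimed strict inequality after dividing by $N$.

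There is essentially no obstacle once Melchior's inequality is available. The only care needed is to check that the hypotheses match: the points must be finite and non-collinear, so that $N\ge 1$ and the inequality applies. It is also worth noting that the additive constant $3$ in Melchior's inequality is what makes the final inequality strict. If one insisted on proving Melchior's inequality from scratch, the Euler-characteristic bookkeeping in the projective plane (where $V-E+F=1$) would be the only delicate point.
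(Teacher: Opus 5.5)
Your proof is correct and follows the paper's route: Melchior's inequality combined with $k-3\ge 1$ for $k\ge 4$, phrased as a contradiction where the paper splits into the cases $x_3<x_2$ and $x_2\le x_3$. You keep the additive constant $3$, and that is exactly what makes the inequality strict in the case $x_2\le x_3$; the paper's write-up drops the $3$ when it passes to $\sum_{k\ge 4}x_k\le x_2$, which on its own gives only $\ge$ in that case.
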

	
	\begin{proof}
		We use Melchior's inequality for real line arrangements. In its dual formulation 
		(for point sets in the real projective plane), it asserts that
		\begin{equation}\label{eq:melchior}
			\ensuremath{3 + \sum_{4\le k}(k-3)x_k\le x_2},
		\end{equation}
		see \cite{ Hirzebruch1986, Melchior1941, PachAgarwal1995}.
		Note that this in turn resolved the famous question of Sylvester and Gallai on the number of $2$-secants.
		Since \(1\le k-3\) for all \(4\le k\), it follows immediately that
		$
		\ensuremath{\sum_{4\le k}x_k\le x_2}.$
		Thus either \(x_3<x_2\) and then \(1/3<\frac{x_2}{\sum x_k}\), or $x_2\le x_3$ and then \(1/3<\frac{x_3}{\sum x_k}\).
	\end{proof}

	As a consequence, this setting always forces very strong concentration concerning the intersection sizes. 
	Note that a closely related problem is to {minimize} the number of $2$-secants, which is the subject of the  celebrated Dirac-Motzkin conjecture, resolved by Green and Tao \cite{Green-Tao}.
	
	We will point out a different kind of concentration in the finite projective setting. In order to do this, we introduce some definitions.

	\begin{defi}[$k$-secants] Let $\Pi_q$ denote a projective plane of order $q$ with point set $\cP$ and line set $\cL$.
		A line $\ell$ in $\cL$ is called a\textit{ $k$-secant of a set} $S$ if $|\ell\cap S|=k$. In the cases $k=0$ and $k=1$, these lines are usually called \textit{skew} lines and\textit{ tangent }lines (to $S$), respectively.\\  Let us denote by $n_{\ell}:=|S\cap \ell|$ the intersection size of $\ell$ with $S$, i.e., the secant size w.r.t. $\ell$,  and let $L_k(S)$ denote the set of $k$-secant lines, i.e., $L_k(S)=\{\ell\in \cL: n_{\ell}=k\}$.
	\end{defi}

	% Let $s_m(S)$ denote the cardinality of the {set of the} most common secants of $S$, i.e., $$s_m(S):=\max_k \{|L_k(S)|\}.$$
	
	%  Let $s_m(S)$ denote the maximum number of occurrences of any secant size, i.e., $$s_m(S):=\max_k \{|L_k(S)|\}.$$
	
	\begin{problem}\label{prob:main}
		Determine $$\min_{ S\subseteq \Pi_q} \max_k \{|L_k(S)|\},$$ in other words, determine the best possible lower bound on the {maximum frequency of secant sizes} for point sets of a  projective plane of order $q$.
	\end{problem}
	
	{The following theorem answers Problem~\ref{prob:main} up to a constant
		factor.}
	
	\begin{theorem}\label{leggyakoribb}
		{For any set $S \subseteq \Pi_q$, the most common secant size appears at least $\Omega(q^{3/2})$ times.} This order of magnitude is best possible, in particular,
		$$%\zsst{\ensuremath{\frac{1}{\sqrt{3}}q^{3/2}-3q}}
		{\ensuremath{\frac{q^2+q+1}{\sqrt{3q+1}}}}\le \min_{ S\subseteq \Pi_q} \max_k \{|L_k(S)|\}\le  %\zsst{\ensuremath{(1+o(q^{-\frac{1}{2}}))\sqrt{\frac{2}{\pi}}q^{3/2}}}
		{\ensuremath{\left(\sqrt{\frac{2}{\pi}}+o(1)\right)q^{3/2}}}.$$
	\end{theorem}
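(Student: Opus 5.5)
The lower bound comes from the standard variance (double counting) identities, and the upper bound from a random set.

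\emph{Lower bound.} Let $|S|=s$, $N=q^2+q+1$, and $M=\max_k |L_k(S)|$. Counting incidences and pairs gives
\[
\sum_{\ell} n_\ell = s(q+1), \qquad \sum_\ell n_\ell(n_\ell-1)=s(s-1).
\]
Hence the variance of the secant sizes over a uniformly random line is
\[
\mathrm{Var}=\frac{s(q+1)+s(s-1)}{N}-\frac{s^2(q+1)^2}{N^2},
\]
and a short computation gives the closed form $\mathrm{Var}=s(N-s)q/N^2$.

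On the other hand, each value $k$ is taken by at most $M$ lines. So, for a fixed mean, the variance of the multiset $\{n_\ell\}$ is at least the variance of a distribution that puts mass $M/N$ on consecutive integers around the mean. That minimal variance is about $(N/M)^2/12$, since a discrete uniform distribution on $m$ consecutive integers has variance $(m^2-1)/12$. Therefore
\[
\frac{(N/M)^2-1}{12}\le \frac{s(N-s)q}{N^2}\le \frac{q}{4},
\]
which gives $(N/M)^2\le 3q+1$, that is, $M\ge N/\sqrt{3q+1}$, exactly the stated bound.

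The one point needing care is the rearrangement argument: among all integer-valued distributions with point masses at most $M/N$, the variance is minimized by filling a consecutive block around the mean. This follows by moving mass toward the mean, which cannot increase $\sum (n_\ell-c)^2$ for $c$ the mean. Since the mean is not fixed during this, I would instead minimize $\sum_\ell (n_\ell-c)^2$ for every real $c$, with $c$ the true mean. The minimizer fills values by increasing $|k-c|$, each with multiplicity $M$. The worst case is $c$ a half-integer or an integer, and the resulting sum is at least $N\big((N/M)^2-1\big)/12$.

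\emph{Upper bound.} Take $S$ random, with each point included independently with probability $1/2$. Then each $n_\ell\sim \mathrm{Bin}(q+1,1/2)$, and the largest point probability is $\binom{q+1}{\lfloor (q+1)/2\rfloor}2^{-q-1}=(1+o(1))\sqrt{2/(\pi q)}$. So $\E|L_k|\le(1+o(1))\sqrt{2/\pi}\,q^{3/2}$ for every $k$.

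The main obstacle is passing from expectations to a bound on the maximum over $k$ that holds simultaneously. For this I would use the second moment. Two distinct lines share exactly one point, so the indicator events $\{n_\ell=k\}$ and $\{n_{\ell'}=k\}$ are nearly independent. Conditioning on the common point, the covariance is $O(p_k\cdot q^{-1})$, using that $\Pr(\mathrm{Bin}(q,1/2)=k)$ and $\Pr(\mathrm{Bin}(q,1/2)=k-1)$ differ by $O(1/q)$ relative to $q^{-1/2}$. Hence
\[
\mathrm{Var}|L_k|=O(q^2\cdot q^{-1/2}+q^4\cdot q^{-1}\cdot q^{-1})=O(q^2).
\]
By Chebyshev, $\Pr\big(|L_k|>\E|L_k|+q^{1.1}\big)=O(q^{-0.2})$. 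This is not small enough for a union bound over all $q+2$ values of $k$.

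To fix this, I would handle the range $|k-(q+1)/2|>C\sqrt{q\log q}$ by a first-moment bound: there the expected total number of such lines is $o(q^{3/2})$. For the central $O(\sqrt{q\log q})$ values, I would use a higher moment instead, either the fourth moment or Janson/Azuma-type concentration. Azuma via the point-exposure martingale fails because a single point affects $q+1$ lines. So the plan is to bound the $2r$-th central moment of $|L_k|$ by counting configurations of $2r$ lines, where pairwise near-independence gives a bound of order $(q^2)^{r}$, i.e. deviations of order $q$, and to take $r$ large enough that the union bound over the central values of $k$ succeeds. This yields an $S$ with $\max_k|L_k(S)|\le(\sqrt{2/\pi}+o(1))q^{3/2}$.
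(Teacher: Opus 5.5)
\textbf{Upper bound.} The genuine gap is the concentration step. You never establish the $2r$-th moment bound $\E\bigl(|L_k|-\E|L_k|\bigr)^{2r}=O_r(q^{2r})$. Pairwise near-independence does not give it: the dependency graph of the events $\{n_\ell=k\}$ is complete, since any two lines meet. A bound of that order would need an explicit estimate of joint centred moments over all configurations of $2r$ lines, which you have not carried out.

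It is also unnecessary. You discarded Chebyshev because you asked for deviations of size $q^{1.1}$, but the target $(\sqrt{2/\pi}+o(1))q^{3/2}$ only requires deviations $o(q^{3/2})$. For two distinct lines, conditioning on their common point gives $\operatorname{Cov}=\frac14(b_k-b_{k-1})^2=O(q^{-2})$, uniformly in $k$, where $b_j=\Pr(\mathrm{Bin}(q,\tfrac12)=j)$. This, and not your stated $O(p_kq^{-1})=O(q^{-3/2})$, is what your bound $\operatorname{Var}|L_k|=O(q^2)$ actually uses; the weaker estimate would only give $O(q^{5/2})$, which is too large.

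With $\operatorname{Var}|L_k|=O(q^2)$, Chebyshev gives $\Pr\bigl(\bigl||L_k|-\E|L_k|\bigr|>\varepsilon q^{3/2}\bigr)=O(\varepsilon^{-2}q^{-1})$. A union bound over the $O(\sqrt{q\log q})$ central values of $k$ then costs $O(\varepsilon^{-2}\sqrt{\log q/q})=o(1)$. Together with your first-moment (Markov) treatment of the tails, this finishes the proof. That is exactly the paper's proof of Theorem~\ref{thm:growing-upper}, which even uses the cruder central-range covariance bound $O(\log q/q^2)$.

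\textbf{Lower bound.} This is the paper's argument (Theorem~\ref{thm:design-lower}): the design identity $\operatorname{Var}=qs(N-s)/N^2\le q/4$ combined with $\frac1{12}\bigl((N/M)^2-1\bigr)\le\operatorname{Var}$. The paper cites the latter from Bobkov--Marsiglietti--Melbourne (Lemma~\ref{lem:maximum-atom}).

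Your reduction, for a fixed centre $c$, to the greedy filling is correct, but your justification of the sharp constant is not. The worst $c$ is an integer or half-integer when $N/M$ is an integer, but not in general. For $N/M=5/2$, the normalized greedy value is $\delta^2-0.4\delta+0.6$, with $\delta\in[0,\frac12]$ the distance from $c$ to the nearest integer, and it is minimized at $\delta=\frac15$. Doing the remainder bookkeeping by hand is precisely where the paper's elementary appendix loses, reaching only $\sqrt{3q+13}$.

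A short complete argument: add an independent $U$ uniform on $[-\frac12,\frac12]$. Then $X+U$ has density at most $M/N$, so by the bathtub principle $\E(X+U-c)^2\ge N^2/(12M^2)$ for every real $c$. Since $\E(X+U-c)^2=\E(X-c)^2+\frac1{12}$, taking $c=\E X$ gives $12\operatorname{Var}(X)\ge (N/M)^2-1$, hence $M\ge N/\sqrt{3q+1}$.
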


	\noindent Observe that the lower and upper bounds differ in a multiplicative constant smaller than $1.382$. 
	
	Theorem~\ref{leggyakoribb} will be derived from %more general lower and upper bounds for block designs developed in Section~\ref{sec:general-block-systems}, and 
	obtaining lower and upper bounds for the following generalization of Problem \ref{prob:main}.
	\begin{problem} Let $\mathcal{H}$ denote an $m$-uniform  set system on a $v$-element ground set $V$. 
		Determine $$\min_{ S\subseteq V} \max_k |\{H\in \mathcal{H}: |H\cap S|=k\}|.$$
	\end{problem}
	
	The general lower bound applies to balanced incomplete block designs, while two probabilistic upper bounds apply to uniform block systems: one for fixed block size under a weak dependency condition, and one for growing block size when pairwise intersections are uniformly bounded. We apply these results to affine and projective subspaces and to inversive planes, and obtain the following results.
	In the following theorems, ${n\brack k}_q$ stands for the Gaussian ($q$-binomial) coefficient, that is, the number of \(k\)-dimensional
	subspaces of \(\mathbb F_q^n\).
	\begin{theorem}\label{affin_result} Let $\mathcal{H}$ denote the set of  $k$-dimensional affine subspaces of the $n$-dimensional affine space $\AG(n,q)$. Fix $k$ and $q$. Then $$C q^{n-\frac{3}{2}k}{n\brack k}_q<\min_{ S\subseteq V} \max_j |\{H\in \mathcal{H}: |H\cap S|=j\}|<C' q^{n-\frac{3}{2}k}{n\brack k}_q,$$
		for some  absolute constants $C, C'>0$, independent of $n$.
	\end{theorem}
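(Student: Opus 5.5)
Both inequalities count intersection sizes over the block system $\mathcal H$ of $k$-dimensional affine subspaces of $\AG(n,q)$. Here $|\mathcal H| = q^{n-k}{n\brack k}_q$, each block has $m=q^k$ points, and any two points lie in $\lambda={n-1\brack k-1}_q$ blocks, so $\mathcal H$ is a $2$-design.

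\textbf{Lower bound.} I would run a second-moment argument on $\mathcal H$. Let $s=|S|$ and $v=q^n$, and let $r={n\brack k}_q$ be the number of blocks through a point. Double counting gives
\[\sum_H n_H = sr, \qquad \sum_H n_H(n_H-1)=s(s-1)\lambda .\]
Hence the mean $\mu = sm/v$ and the variance $\sigma^2$ of $n_H$ over a uniformly random block satisfy
\[\sigma^2 = \frac{s(s-1)\lambda + sr}{|\mathcal H|} - \mu^2 .\]
With $\lambda/r \approx q^{k-n}$, this gives roughly $\sigma^2 \approx \mu(1-\mu/m)$. This is the hypergeometric-type variance, which is at most $m/4$.

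The point is that this is an upper bound on the spread, not a lower bound. So I would use it through Chebyshev: at least half of the blocks have $n_H\in[\mu-\sqrt{2}\sigma,\mu+\sqrt{2}\sigma]$. That window contains at most $2\sqrt2\sigma+1 = O(q^{k/2})$ integers, so by pigeonhole some value $j$ occurs at least
\[\frac{|\mathcal H|}{2(2\sqrt2\sigma+1)} \ge C q^{n-k}{n\brack k}_q\, q^{-k/2}\]
times. This is exactly the lower bound claimed in Theorem~\ref{affin_result}. The Chebyshev and pigeonhole constants do not depend on $n$. The same argument, done more carefully with Cauchy--Schwarz on the frequency vector (using $\sum_j |L_j| (j-\mu)^2 = |\mathcal H|\sigma^2$), should give the $\frac{q^2+q+1}{\sqrt{3q+1}}$ constant of Theorem~\ref{leggyakoribb}.

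\textbf{Upper bound.} I would choose $S$ at random, including each point independently with probability $1/2$. Then each $n_H\sim \mathrm{Bin}(q^k,1/2)$, and the local limit estimate gives $\Pr(n_H=j)\le c\,q^{-k/2}$ for every $j$. So $\E|L_j|\le c\,q^{-k/2}|\mathcal H|$, which is already the desired order.

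It remains to show that no single $j$ exceeds this substantially, simultaneously for all $j$. Only $q^k+1$ values of $j$ are possible, and since $k$ and $q$ are fixed this is a constant. So it suffices to prove concentration of each $|L_j|$ and take a union bound. I would use Chebyshev together with a covariance bound. The indicators $\1[n_H=j]$ and $\1[n_{H'}=j]$ are independent when $H\cap H'=\emptyset$. When the blocks intersect, the covariance is bounded by a constant, and the number of such pairs is a vanishing fraction of all pairs, at least when the blocks meeting a fixed block form an $o(1)$ fraction as $n\to\infty$.

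Here I expect the main obstacle. For affine subspaces, the fraction of blocks meeting a fixed $k$-flat is about $q^{2k-n}\cdot(\text{const})$, which tends to $0$. Then $\mathrm{Var}|L_j| = o((\E|L_j|)^2)$, so each $|L_j|\le 2\E|L_j|$ with probability $1-o(1)$, and the union bound over the constantly many $j$ finishes the argument for large $n$.

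For small $n$ I would simply absorb the finitely many cases into the constant $C'$, using the trivial bound $|\mathcal H|$. The dependency estimate, that is, bounding $\sum_{H\cap H'\ne\emptyset}\mathrm{Cov}$ uniformly in $n$, is the step I would check most carefully. This is the weak dependency condition the paper alludes to for the case of fixed block size.
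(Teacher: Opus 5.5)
Your proof is correct. The upper bound follows the paper's route (Theorem~\ref{thm:fixed-upper}): a random $S$, independence of the indicators for disjoint flats, $D_n\le N_n\,q^k{n\brack k}_q=q^{2k-n}N_n^2$, Chebyshev, and a union bound over the $q^k+1$ values of $j$. Fixing $p=1/2$ instead of optimizing $p$ only replaces $\rho_{q^k}$ by $\max_i\binom{q^k}{i}2^{-q^k}$, which has the same order $q^{-k/2}$.

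The lower bound is where you genuinely differ. The paper feeds the exact variance $m\frac{|S|}{v}\bigl(1-\frac{|S|}{v}\bigr)\frac{v-m}{v-1}\le m/4$ into the Bobkov--Marsiglietti--Melbourne maximal-atom inequality (Lemma~\ref{lem:maximum-atom}) and gets $N_n/\sqrt{3q^k+1}$. You instead use Chebyshev plus pigeonhole on a window of $O(\sigma)$ integers. Your version is self-contained and fully sufficient for the $\Theta$-statement, but it costs you in the constant. Even with the window width optimized, the Chebyshev-plus-pigeonhole argument gives only about $\frac{2}{3\sqrt3}N_n/\sqrt m$ in the worst case, instead of $\frac{1}{\sqrt3}N_n/\sqrt m$. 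So your aside that a more careful Cauchy--Schwarz version recovers $\frac{q^2+q+1}{\sqrt{3q+1}}$ is doubtful. That constant needs either the rearrangement argument of the appendix (put mass $M$ on the integers nearest $\mu$) or Lemma~\ref{lem:maximum-atom} itself.

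You should also state the variance exactly, via $\lambda/r=(m-1)/(v-1)$, rather than with $\approx$. Then $\sigma\le\sqrt m/2$ visibly holds for every $n$.

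Finally, your explicit handling of small $n$ is more careful than the paper, whose bounds carry $o(1)$ terms as $n\to\infty$. For $n=k$ one has $B=1$ while $q^{n-\frac32k}{n\brack k}_q=q^{-k/2}$. So covering all $n\ge k$ forces $C'$ to depend on $q$ and $k$, which is exactly what absorbing the finitely many small cases into $C'$ does.
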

	
	\begin{theorem}\label{proj_result}
		Let $\mathcal{H}$ denote the set of $k$-dimensional subspaces of the $n$-dimensional projective space $\PG(n,q)$.  Fix $k$ and $q$. Then 
		
		$$C_1q^{-k/2} {n+1\brack k+1}_q<\min_{ S\subseteq V} \max_j |\{H\in \mathcal{H}: |H\cap S|=j\}|< C_2q^{-k/2} {n+1\brack k+1}_q,$$ for some  absolute constants $C_1, C_2>0$, independent of  $n$.
	\end{theorem}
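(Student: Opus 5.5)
The plan is to treat the $k$-dimensional subspaces of $\PG(n,q)$ as a uniform block system on the $v=\frac{q^{n+1}-1}{q-1}$ points. Each block has size $m=\frac{q^{k+1}-1}{q-1}$ and there are $b={n+1\brack k+1}_q$ blocks. Since $k$ and $q$ are fixed, $m$ is a constant with $q^k\le m\le 2q^k$, so $q^{-k/2}b$ and $b/\sqrt m$ agree up to absolute constants. Both bounds therefore reduce to showing that the answer is $\Theta(b/\sqrt m)$. The lower bound will come from a second-moment argument that uses only the $2$-design property. The upper bound will come from a random set together with Chebyshev's inequality.

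\textbf{Lower bound.} The $k$-subspaces form a $2$-$(v,m,\lambda)$ design, since $\PGL$ acts $2$-transitively on points. Let $r$ be the number of blocks through a point. Fix $S$ with $|S|=s$ and let $n_H=|H\cap S|$ for a uniformly random block $H$. Double counting gives $\E[n_H]=sr/b$ and $\E[n_H(n_H-1)]=s(s-1)\lambda/b$. Writing $p=s/v$ and using $bm=vr$ and $r(m-1)=\lambda(v-1)$, a direct computation gives
\[
\mathrm{Var}(n_H)=mp(1-p)\frac{v-m}{v-1}\le \frac m4 .
\]
Now let $f=\max_j|L_j|$, so every value of $n_H$ has probability at most $f/b$. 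Among integer-valued distributions with all atoms of mass at most $1/N$, the variance is smallest when the mass sits uniformly on about $N$ consecutive integers, and that variance is about $(N^2-1)/12$. Applying this with $N=b/f$ gives $(b/f)^2\le 12\,\mathrm{Var}+1\le 3m+1$, hence $f\ge b/\sqrt{3m+1}\ge C_1q^{-k/2}b$. This is the same computation that gives the bound $(q^2+q+1)/\sqrt{3q+1}$ in Theorem~\ref{leggyakoribb}; the only delicate point is making the step "atom bound $\Rightarrow$ variance lower bound" precise, including non-integer $b/f$.

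\textbf{Upper bound.} Put each point into $S$ independently with probability $1/2$. Then $n_H\sim\mathrm{Bin}(m,1/2)$ for every $H$, so $\E|L_j|=b\binom{m}{j}2^{-m}\le c\,b/\sqrt m$. To bound $\mathrm{Var}|L_j|$, note that indicators of disjoint blocks are independent, so only intersecting pairs contribute, and each contributes at most $1$. Hence
\[
\mathrm{Var}|L_j|\le b\cdot D,
\]
where $D$ is the number of blocks meeting a fixed block. The main step is to show $D/b\to0$ as $n\to\infty$ with $k,q$ fixed. A fixed block meets at most $m\cdot\frac{r}{1}$ others, and $r/b=m/v\to0$, so $D/b\le m^2/v\to 0$.

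Chebyshev then gives
\[
\Pr\left(|L_j|>\E|L_j|+b/\sqrt m\right)\le \frac{mD}{b}\le \frac{m^3}{v},
\]
and a union bound over the $m+1$ values of $j$ yields total failure probability at most $(m+1)m^3/v<1$ once $n\ge n_0(k,q)$. For such $n$ some $S$ has $\max_j|L_j|\le (c+1)b/\sqrt m$. For the finitely many $n<n_0$, the trivial bound $\max_j|L_j|\le b$ is absorbed by enlarging $C_2$. This also matches the paper's fixed-block-size probabilistic bound under a weak dependency condition, which is exactly the statement $D=o(b)$.
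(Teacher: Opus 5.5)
Your proposal is correct and follows the paper's route. The lower bound is exactly Theorem~\ref{thm:design-lower}: your ``atom bound $\Rightarrow$ variance lower bound'' step is the Bobkov--Marsiglietti--Melbourne inequality (Lemma~\ref{lem:maximum-atom}), which the paper cites rather than proves. The upper bound is Theorem~\ref{thm:fixed-upper} specialized to $p=1/2$, using the same weak-dependency estimate $D\le mr$; fixing $p=1/2$ instead of optimizing to $\rho_m$, and making the failure probability $(m+1)m^3/v$ explicit, are cosmetic differences that suffice for the order of magnitude.
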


	For an inversive (or Möbius) plane of order \(q\), we obtain respective  bounds of order \(q^{5/2}\).
	%\todo[inline]{itt valamilyen fő eredményeket még részleteznék, talán  vektortereseket}
	%\zsst{Our main problem investigates the discrepancy of the secant size distribution; more precisely, it measures how close the secant size distribution can be to the uniform distribution over $[0, q+1]$.}
	{The maximum frequency studied in Problem~\ref{prob:main} measures the
		balance of the secant size distribution through the size of its largest
		class.}
	The distribution of secant sizes is closely related to various central problems in finite geometry, or in a broader context, combinatorics. The most notable examples include the characterizations of blocking sets or $k$-fold blocking sets, where $0$-secants, and all $t$-secants for $t<k$ are forbidden, respectively, and  $(k, n)$-arcs, where   all $t$-secants for $t>k$ are forbidden. Various  configurations can be characterized by having only \textit{very few} secant sizes, such as subplanes or unitals, which have many important applications in extremal graph theory or Ramsey theory \cite{Mubayi-Vers, Sam-Vers}. Even avoiding a single secant size can be challenging, we refer to the theory of untouchable (or tangent-free) sets \cite{Blokhuis, BSzW} or \cite{Heger-Nagy} for the general case. In a sequel paper, we also discuss the largest possible number of secants with the same size, in terms of the size of the set \cite{ZsZ2}. 
	The conceptual framework of studying intersection distribution also appeared in connection with characterizing polynomials over finite fields by analyzing the intersection patterns between polynomial graphs and affine lines in the work of Li and Pott \cite{Li-Pott}. They introduced the  "non-hitting index", which simply enumerates the affine lines that do not intersect a given polynomial's graph $S$; thus it coincides with $|L_0(S)|$. This index has proven vital for  deriving bounds related to  Kakeya sets in affine planes \cite{Li-Pott}. More recently, their approach was further investigated in \cite{Huczynska}.
	Intersection-size questions for affine subspaces have also been studied
	from several related extremal viewpoints.  Kov\'acs and Nagy
	\cite{KovacsNagy25} investigated which intersection sizes with
	\(k\)-dimensional affine subspaces of \(AG(n,2)\) are unavoidable for
	point sets of prescribed cardinality, and also studied constructions
	whose set of possible intersection sizes is small.  Thus their problem
	concerns essentially the support of the intersection-size distribution,
	whereas here we study the multiplicities of its values.
	Very recently, Xu \cite{Xu26} introduced affine subspace statistics in
	\(\mathbb F_2^n\), studying, for a fixed intersection size \(s\), the
	maximum proportion of \(d\)-flats satisfying \(|A\cap F|=s\).
	Subsequent work of Chao, Xu and Zakharov \cite{CXZ26} obtained
	asymptotically sharp bounds for this parameter in several regimes,
	including intersection sizes with prescribed \(2\)-adic valuation, and
	determined the case \(s=1\) exactly.  These problems are complementary
	to the present one: they maximize the frequency of a prescribed
	intersection size, whereas we minimize the largest frequency over all
	intersection sizes.
	%The characterization of sets with no skew lines, or in general, sets without small secants is the subject of the study of covering and $(k-)$fold blocking sets \cite{}. Sets having line intersection of only  few different sizes is a vivid topic in finite geometry \cite{} which is closely related to block designs \cite{}, and notable examples are  subplanes, complete arcs, $(k;n)$-arcs and Korchmáros-Mazzocca arcs \cite{}. In fact, avoiding a single secant size can raise challenges, we refer to the theory of untouchable (or tangent-free) sets \cite{ , } or \cite{Heger-Nagy} for the general case.\\

	%Discrepancy problems study how evenly the elements of a finite set system can be divided under a two--coloring.
	%Given a finite set $X$ and a family $\mathcal{F}$ of subsets of $X$, the discrepancy of a coloring
	%$\chi : X \to \{-1,+1\}$ is defined as
	%\[
	%\mathrm{disc}(\mathcal{F},\chi)
	% = \max_{F \in \mathcal{F}} \left| \sum_{x \in F} \chi(x) \right|,
	%\]
	%and the discrepancy of $\mathcal{F}$ is the minimum of this quantity over all such colorings.
	%The central goal is to bound this minimum in terms of structural parameters of the set system. 

	Determining the smallest possible maximum frequency of secant sizes  is also related to the concept of legitimate colorings, introduced by Rosa, and studied by  Alon and Füredi \cite{Alon-Furedi}. A\textit{ legitimate coloring of a hypergraph} is a coloring of the vertex set by $k$ colors such that all hyperedges are distinguishable by their color distribution, in other words, by their multiplicity list $c_1(H), c_2(H), \ldots, c_k(H)$, where $c_i(H)$ denotes the number of vertices in hyperedge $H$ colored with the $i$-th color. The minimum number of colors for which a legitimate coloring of the hypergraph exists is the \textit{legitimate coloring number}. Alon and Füredi initiated the study of this coloring number by considering the hypergraph defined by the lines of a projective plane.  {Observe that a natural obstacle to having a small legitimate coloring number is when the distribution of $c_i$ is very unbalanced in every coloring. This corresponds exactly to the distribution of secant sizes when one considers  projective planes. We will revisit this link in Section \ref{sec:appl}.}%\todo{ez nem csak 2-színezésnél gond}
	%\zs{For a \(2\)-coloring, one color class can be identified with a subset \(S\) of the vertex set, and its multiplicity in a hyperedge \(H\) is exactly \(|H\cap S|\). Thus a high frequency of one intersection size is an obstruction to a legitimate \(2\)-coloring. We revisit this connection in Section~\ref{sec:appl}, where we study legitimate colorings of linear hypergraphs.}

	The celebrated direction problem \cite{Ball03, BBB99} is also closely related to the question posed above. Indeed, 
	the non-determined directions of a point set in $\Pi_q$ of size $q$ correspond to slopes on which the point set is equidistributed. For the case of point sets of {arbitrary} size, we refer to
	\cite{Ghidelli, KS24}.
	\begin{comment}
	\zsst{Our paper is organised as follows. In Section 2 we prove our main result, Theorem 1.4. To prove the lower bound, we point out how strongly the standard equations influence the distribution of secant sizes, preventing it from being very uniform. The upper bound relies on a randomised construction. Section 3 is devoted to the presentation of explicit constructions using low-degree polynomials: parabolas and elliptic curves. These point sets will not be extremal w.r.t. $\min_{ S\subseteq \Pi_q} \max_k \{|L_k(S)|\}$, but are expected to be not far off, and have links to intensively studied problems in number theory, namely character sum estimates. In Section~\ref{sec:appl}, we discuss the connection between legitimate
	colorings of \(\PG(2,q)\) and our main result, and prove a statement that
	can be viewed as a legitimate-coloring version of the
	Erdős--Faber--Lovász conjecture. Finally,
	Section~\ref{sec:general-block-systems} develops the general block-system
	framework described above and presents its finite-geometric applications
	and refinements for small parameters.}
	\end{comment}
	{Our paper is organised as follows. Section~\ref{sec:general-block-systems} develops the general  framework to minimize maximum frequency of intersection sizes for block design structures. Then in Section \ref{sec:application}, we  derive Theorem~\ref{leggyakoribb} for projective planes, and continue by obtaining the respective results for $k$-dimensional subspaces of vector spaces and projective spaces, and for inversive (or Möbius) planes.  Section \ref{sec:ref} presents refinements for the affine and projective space case, when the subspace has small cardinality. Then in Section \ref{sec:constructions in affine planes}, we give three explicit constructions in affine planes of prime
		order, based on parabolas and elliptic curves, and relate their
		intersection size distributions to character sum estimates. These
		constructions also yield projective-plane examples through the standard
		embedding of the affine plane. In Section~\ref{sec:appl}, we relate the
		problem to legitimate colorings and prove that every \(n\)-uniform linear
		hypergraph with \(n\) edges admits a legitimate \(2\)-coloring. An alternative elementary lower-bound argument for projective planes is included in the appendix.}

	\section[Bounds on the maximum frequency of an intersection size in block systems]{{Bounds on the maximum frequency of an intersection size in block designs}}
	\label{sec:general-block-systems}
	{Problem~\ref{prob:main} admits a natural formulation in the more general setting of block designs.}
	%\todo{system$\to$ design cserét csináltam}
	
	Let \(V\) be a finite set of size \(v\), and let \(\mathcal H\) be a family of subsets of \(V\), called blocks. We write \(N=|\mathcal H|\). The block system \((V,\mathcal H)\) is \(m\)-uniform if every block has size \(m\).
	
	An \(m\)-uniform block system is a \(2\text{-}(v,m,\lambda)\) design if every pair of distinct points lies in exactly \(\lambda\) blocks. If \(m<v\), it is a balanced incomplete block design. In this case every point lies in the same number \(r\) of blocks. Counting point--block incidences in two ways, we get
	\[
	vr=Nm,
	\qquad
	\frac{r}{N}=\frac{m}{v},
	\]
	and counting pairs consisting of two distinct points and a block containing them, we obtain
	\[
	\binom{v}{2}\lambda=N\binom{m}{2},
	\qquad
	\frac{\lambda}{N}=\frac{m(m-1)}{v(v-1)}.
	\]
	
	For \(S\subseteq V\), define
	\[
	\mathcal H(S,i)=\{H\in\mathcal H:|H\cap S|=i\},
	\qquad 0\leq i\leq m,
	\]
	and put \[M_{\mathcal H}(S)=\max_{0\leq i\leq m}|\mathcal H(S,i)|.\] Finally, define the minimum of $M_{\mathcal H}(S)$  over all sets.
	
	\begin{defi}
		\[B(\mathcal H):=\min_{S\subseteq V}M_{\mathcal H}(S).\]
	\end{defi}

	{Our aim is to establish lower and upper bounds for \(B(\mathcal H)\) under suitable assumptions on the block system \((V,\mathcal H)\). Applying these bounds to the lines of a projective plane will yield Theorem~\ref{leggyakoribb}.}
	
	\subsection[General lower and upper bounds]{{General lower and upper bounds}}
	For the lower bound, we assume that the block system is a balanced
	incomplete block design. The proof relies on the following inequality
	for integer-valued random variables. 
	
	\begin{lemma}[Bobkov--Marsiglietti--Melbourne
		{\cite[Theorem~1.1]{BMM}}]
		\label{lem:maximum-atom}
		If \(X\) is integer-valued, then
		\[
		\ensuremath{\frac{1}{12}\left(\frac{1}{\max_j\Pr(X=j)^2}-1\right)
			\leq\operatorname{Var}(X)}.
		\]
	\end{lemma}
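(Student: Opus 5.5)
The plan is to smooth \(X\) into a continuous random variable and then use the elementary fact that, among densities bounded by a given constant, the uniform density on an interval has the least variance. Write \(p:=\max_j\Pr(X=j)\). If \(\operatorname{Var}(X)=\infty\) there is nothing to prove, so assume \(X\) has finite variance.

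\emph{Smoothing.} Let \(U\) be uniform on \([0,1)\) and independent of \(X\), and set \(Y=X+U\). Then \(Y\) has a density \(f\), which equals \(\Pr(X=j)\) on \([j,j+1)\); in particular \(0\le f\le p\) everywhere. By independence,
\[
\operatorname{Var}(Y)=\operatorname{Var}(X)+\operatorname{Var}(U)=\operatorname{Var}(X)+\tfrac1{12}.
\]
So it suffices to prove \(\operatorname{Var}(Y)\ge \frac{1}{12p^2}\); then \(\operatorname{Var}(X)\ge \frac1{12}\bigl(\frac1{p^2}-1\bigr)\), as claimed.

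\emph{The continuous inequality.} This bathtub-type estimate is the only step needing an argument, and it is where I would take care. Let \(\mu=\E Y\), put \(r=\frac1{2p}\), and define the comparison density \(g=p\cdot\1_{[\mu-r,\mu+r]}\). Then \(\int g=1=\int f\), so \(\int (f-g)=0\).

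Now compare \(f\) and \(g\) pointwise:
\begin{itemize}
\item outside \([\mu-r,\mu+r]\) we have \(f-g=f\ge 0\) and \((y-\mu)^2\ge r^2\);
\item inside \([\mu-r,\mu+r]\) we have \(f-g=f-p\le 0\) and \((y-\mu)^2\le r^2\).
\end{itemize}
In both regions \(\bigl((y-\mu)^2-r^2\bigr)(f-g)\ge 0\). Integrating and using \(\int(f-g)=0\) gives
\[
\int (y-\mu)^2 f(y)\,dy\;\ge\;\int (y-\mu)^2 g(y)\,dy=\frac{(2r)^2}{12}=\frac{1}{12p^2}.
\]
The left-hand side is exactly \(\operatorname{Var}(Y)\), because \(\mu\) is the mean of \(Y\).

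\emph{Conclusion.} Combining the two steps gives \(\operatorname{Var}(X)+\frac1{12}\ge\frac1{12p^2}\), which is Lemma~\ref{lem:maximum-atom}. The only subtle point is that the comparison interval must be centred at the mean of \(Y\) itself, so that the integral is the variance and not a second moment about some other point. The finite-variance assumption ensures every integral above is finite.
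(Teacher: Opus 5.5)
Your proof is correct. The paper gives no proof of this lemma: it cites it as Theorem~1.1 of Bobkov--Marsiglietti--Melbourne and only remarks that the lower bound does not need the further hypotheses of that theorem. Your argument makes the lemma self-contained. The smoothing $Y=X+U$ turns the bound on atoms into the density bound $f\le p$. The bathtub comparison with the uniform density of height $p$ on $[\mu-r,\mu+r]$ then gives $\operatorname{Var}(Y)\ge 1/(12p^2)$. The variance $1/12$ of $U$ is exactly where the $-1$ in the statement comes from. The pointwise sign check and the integrability bookkeeping are also in order.

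It is worth setting this against the paper's own elementary route in the appendix (Proposition~\ref{prop:general_lower_bound}). That argument does the same rearrangement directly on the integers, placing mass $M$ on the pairs $\{\lfloor\mu\rfloor-j,\lfloor\mu\rfloor+1+j\}$. Because it works on the lattice, it must treat the fractional part of $\mu$ and the incomplete last layer (the remainder $r$ in $N=2Mt+r$) by hand, and it only reaches $N^{3/2}/\sqrt{12V+13N}\le M$. Your smoothing trick avoids that loss. With $V=N\operatorname{Var}(X)$ and $M=pN$, it gives $N^{3/2}/\sqrt{12V+N}\le M$, which is exactly the form used in Theorem~\ref{thm:design-lower} and Proposition~\ref{prop:projective-lower}. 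So your argument supplies an elementary proof of the sharp form, replacing both the external citation and the weaker bound in the appendix.
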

	Note that we only use their lower bound, so some of their further assumptions can be omitted, as they also observe in \cite {BMM}.
	
	We now separate the lower and upper bounds, since they require different assumptions.
	
	\begin{theorem}
		\label{thm:design-lower}
		Let \((V,\mathcal H)\) be a balanced incomplete
		\(2\text{-}(v,m,\lambda)\) design. Then
		\[
		\frac{N}{\sqrt{3m+1}}\leq \frac{N}{\sqrt{3m\frac{v-m}{v-1}+1}}\leq B(\mathcal H).
		\]
	\end{theorem}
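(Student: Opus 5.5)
Fix an arbitrary $S\subseteq V$ with $|S|=s$. Pick a block $H\in\mathcal H$ uniformly at random and set $X=|H\cap S|$, an integer-valued random variable. Since $\Pr(X=i)=|\mathcal H(S,i)|/N$, we have $\max_j \Pr(X=j)=M_{\mathcal H}(S)/N$. The plan is to compute $\operatorname{Var}(X)$ exactly from the design parameters, bound it above uniformly in $s$, and then invoke Lemma~\ref{lem:maximum-atom}.

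For the variance, write $X=\sum_{x\in S}\1[x\in H]$ and use the two counting identities preceding the definition of $B(\mathcal H)$. Each point lies in $r$ blocks, so
\[
\E X = s\frac{r}{N}=\frac{sm}{v}.
\]
Each ordered pair of distinct points lies in $\lambda$ blocks, so
\[
\E X^2 = s\frac{m}{v}+s(s-1)\frac{m(m-1)}{v(v-1)}.
\]
Hence
\[
\operatorname{Var}(X)=\frac{sm}{v}\Bigl(1+\frac{(s-1)(m-1)}{v-1}-\frac{sm}{v}\Bigr)
=\frac{m(v-m)}{v^2(v-1)}\,s(v-s).
\]
This is the familiar hypergeometric-type variance. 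Checking this simplification is routine algebra: the bracket equals $(v-s)(v-m)/(v(v-1))$. Since $s(v-s)\le v^2/4$, we get
\[
\operatorname{Var}(X)\le \frac{m(v-m)}{4(v-1)}.
\]

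Now apply Lemma~\ref{lem:maximum-atom} with $p=M_{\mathcal H}(S)/N$:
\[
\frac{1}{12}\bigl(p^{-2}-1\bigr)\le \frac{m(v-m)}{4(v-1)},
\]
which gives
\[
p^{-2}\le 3m\frac{v-m}{v-1}+1,
\qquad
M_{\mathcal H}(S)\ge \frac{N}{\sqrt{3m\frac{v-m}{v-1}+1}}.
\]
Since $S$ was arbitrary, this bound holds for $B(\mathcal H)$. The first inequality of the theorem follows from $\frac{v-m}{v-1}\le 1$, which holds because $m\ge 1$.

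There is no real obstacle here. The only care needed is in the variance computation, where one must use the pair count $\lambda$ correctly over ordered pairs, and in noting that the lemma needs no extra hypotheses on $X$. Degenerate cases such as $S=\emptyset$ give variance $0$ and $p=1$, which is consistent with the bound.
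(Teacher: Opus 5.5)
Your proof is correct and follows the paper's argument essentially step for step. You take a uniformly random block, compute the exact variance $m\frac{s}{v}\bigl(1-\frac{s}{v}\bigr)\frac{v-m}{v-1}$ from the point and pair counts, bound it using $s(v-s)\le v^2/4$, and then apply the Bobkov--Marsiglietti--Melbourne inequality; the only cosmetic difference is that you expand $\mathbb{E}X^2$ directly, whereas the paper goes through $\mathbb{E}[X(X-1)]$.
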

	
	\begin{proof}
		Fix \(S\subseteq V\), choose \(H\in\mathcal H\) uniformly at random,
		and let \(X=|H\cap S|\). The \(2\)-design identities give
		\[
		\mathbb E[X]=\frac{m|S|}{v},
		\qquad
		\mathbb E[X(X-1)]
		=\frac{m(m-1)|S|(|S|-1)}{v(v-1)}.
		\]
		Indeed, each point of \(S\) belongs to \(H\) with probability
		\(r/N=m/v\), while \(X(X-1)\) counts ordered pairs of distinct
		points of \(S\) contained in \(H\), each of which occurs with probability
		$$\lambda/N=\frac{m(m-1)}{v(v-1)}.$$ Therefore
		\[
		\begin{aligned}
			\operatorname{Var}(X)
			&=\mathbb E[X(X-1)]+\mathbb E[X]-\mathbb E[X]^2\\
			&=m\frac{|S|}{v}\left(1-\frac{|S|}{v}\right)
			\frac{v-m}{v-1}
			\leq\frac{m}{4}\frac{v-m}{v-1}.
		\end{aligned}
		\]
		Since \(\max_i\Pr(X=i)=M_{\mathcal H}(S)/N\),
		Lemma~\ref{lem:maximum-atom} yields
		\[
		\frac{N}{\sqrt{3m\frac{v-m}{v-1}+1}}
		\leq\frac{N}{\sqrt{12\operatorname{Var}(X)+1}} \leq M_{\mathcal H}(S)
		.
		\]
		Taking the minimum over \(S\subseteq V\) proves the lower bound.
	\end{proof}
	
	For the upper bound, the design property is not needed; for fixed block size it is enough to assume the following weak dependency condition.
	
	\begin{theorem}
		\label{thm:fixed-upper}
		\label{thm:general-hypergraph}
		Let \(m\) be fixed, and let \((V_n,\mathcal H_n)\) be a sequence of
		\(m\)-uniform block systems, with \(N_n=|\mathcal H_n|\). Suppose that
		\[
		D_n:=|\{(H,H')\in\mathcal H_n^2:H\cap H'\neq\emptyset\}|
		=o(N_n^2)
		\qquad\text{as }n\to\infty.
		\]
		Then
		\[
		B(\mathcal H_n)\leq (\rho_m+o(1))N_n,
		\qquad
		\rho_m=\inf_{0\leq p\leq1}\max_{0\leq i\leq m}
		\binom mi p^i(1-p)^{m-i}.
		\]
	\end{theorem}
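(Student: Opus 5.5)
Fix $p^\ast\in[0,1]$ attaining (or nearly attaining, within $\varepsilon$) the infimum defining $\rho_m$; the function $p\mapsto\max_i\binom mi p^i(1-p)^{m-i}$ is continuous on a compact interval, so the infimum is attained. We take a random set $S\subseteq V_n$, putting each point in $S$ independently with probability $p=p^\ast$. For a fixed block $H$, the intersection size $|H\cap S|$ is then exactly $\mathrm{Bin}(m,p)$, since $|H|=m$. Hence for each $0\le i\le m$, writing $Y_i=|\mathcal H_n(S,i)|=\sum_{H}\1[|H\cap S|=i]$, we get
\[
\E[Y_i]=N_n\binom mi p^i(1-p)^{m-i}\le \rho_m N_n .
\]
We then show that every $Y_i$ is concentrated around its mean, and conclude by a union bound over the $m+1$ values of $i$. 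Since $m$ is fixed, this union bound costs nothing.

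For concentration we use Chebyshev's inequality via a second-moment computation. We have
\[
\operatorname{Var}(Y_i)=\sum_{(H,H')}\operatorname{Cov}\bigl(\1[|H\cap S|=i],\1[|H'\cap S|=i]\bigr).
\]
The key observation is that if $H\cap H'=\emptyset$, the two indicators depend on disjoint sets of independent coin flips, so the covariance vanishes. Every other covariance is at most $1$ in absolute value, so
\[
\operatorname{Var}(Y_i)\le D_n=o(N_n^2).
\]
Fix $\varepsilon>0$. Chebyshev gives $\Pr\bigl(Y_i\ge \E Y_i+\varepsilon N_n\bigr)\le D_n/(\varepsilon^2N_n^2)$, which tends to $0$. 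The union bound over $i=0,\dots,m$ then gives a total failure probability of at most $(m+1)D_n/(\varepsilon^2N_n^2)\to0$. So for all large $n$ some set $S$ satisfies $M_{\mathcal H_n}(S)\le(\rho_m+\varepsilon)N_n$ simultaneously for all $i$. Since $\varepsilon$ is arbitrary, this gives $B(\mathcal H_n)\le(\rho_m+o(1))N_n$.

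The only step needing care is the variance bound, and it is straightforward: the hypothesis $D_n=o(N_n^2)$ is tailored to it, and the diagonal pairs $H=H'$ are already counted in $D_n$ because blocks are nonempty when $m\ge1$. The case $m=0$ is trivial, since then $\rho_0=1$. If one prefers not to rely on the infimum being attained, choosing $p$ within $\varepsilon$ of it suffices.
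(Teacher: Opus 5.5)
Your proof is correct and follows essentially the paper's argument: a random $p$-subset, the variance of $|\mathcal H_n(S,i)|$ bounded by $D_n$ because indicators of disjoint blocks are independent, then Chebyshev and a union bound over the $m+1$ intersection sizes. The only difference is cosmetic: you fix an optimal $p^\ast$ in advance using continuity and compactness, whereas the paper takes the infimum over $p$ at the end, so your final step is slightly cleaner.
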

	
	\begin{proof}
		Fix \(p\in[0,1]\) and choose \(S\subseteq V_n\) by
		including each point independently with probability \(p\). For every
		\(0\leq i\leq m\),
		\[
		\mathbb E\bigl[|\mathcal H_n(S,i)|\bigr]
		=N_n\binom mi p^i(1-p)^{m-i}.
		\]
		Since
		\[
		|\mathcal H_n(S,i)|
		=\sum_{H\in\mathcal H_n}\mathbf 1_{\{|H\cap S|=i\}},
		\]
		its variance is the sum of the covariances of these indicators. Indicators
		corresponding to disjoint blocks are independent, so only intersecting
		ordered pairs contribute. Since each covariance has absolute value at most
		\(1\), we obtain
		\[
		\operatorname{Var}\bigl(|\mathcal H_n(S,i)|\bigr)
		\leq D_n=o(N_n^2).
		\]
		
		{For each fixed \(i\) and every \(\varepsilon>0\), Chebyshev's inequality gives
			\[
			\Pr\left(
			\left|
			\frac{|\mathcal H_n(S,i)|}{N_n}
			-\binom mi p^i(1-p)^{m-i}
			\right|>\varepsilon
			\right)
			\leq
			\frac{\operatorname{Var}(|\mathcal H_n(S,i)|)}
			{\varepsilon^2N_n^2}
			=o(1).
			\]
			Thus \(|\mathcal H_n(S,i)|/N_n\) differs from its expectation by \(o(1)\)
			with probability tending to \(1\). Since \(m\) is fixed, there are only
			\(m+1\) possible values of \(i\), so the union bound shows that these
			estimates hold simultaneously. Thus, with probability tending to \(1\),
			for every \(0\leq i\leq m\),
			\[
			\frac{|\mathcal H_n(S,i)|}{N_n}
			=
			\binom mi p^i(1-p)^{m-i}+o(1).
			\]
			Consequently, there exists \(S\subseteq V_n\) such that
			\begin{equation}\label{eq:upper_M}
				M_{\mathcal H_n}(S)
				\leq
				\left(
				\max_{0\leq i\leq m}
				\binom mi p^i(1-p)^{m-i}
				+o(1)
				\right)N_n.    
			\end{equation}

			Taking the infimum over \(p\in[0,1]\) proves the upper bound.}
	\end{proof}
	
	\begin{remark}[The constant in the upper bound \eqref{eq:upper_M} of $M_{\mathcal H_n}$]
		For \(0\leq i\leq m\), write
		\[
		b_i(p)=\binom mi p^i(1-p)^{m-i}.
		\]
		For fixed \(0<p<1\),
		\[
		\frac{b_{i+1}(p)}{b_i(p)}
		=
		\frac{m-i}{i+1}\frac{p}{1-p},
		\]
		and this ratio decreases with \(i\). 
		
		\begin{comment}
		
		Hence the terms can change from
		increasing to decreasing at most once. For \(1\leq j\leq m-1\), it
		follows that \(b_j(p)\) is maximal precisely when it is at least as large
		as its two neighbours. Using the ratio above,
		\[
		b_{j-1}(p)\leq b_j(p)
		\iff p\geq\frac{j}{m+1},
		\qquad
		b_{j+1}(p)\leq b_j(p)
		\iff p\leq\frac{j+1}{m+1}.
		\]
		For \(j=0\) and \(j=m\), the same conclusion follows from the single
		neighbouring inequality.
		Thus, for every \(0\leq j\leq m\), \(j\) is a
		maximizing index precisely when
		\[
		\frac{j}{m+1}\leq p\leq\frac{j+1}{m+1}.
		\]
		At a common endpoint of two such intervals, the corresponding consecutive
		terms are equal.
		On the interval corresponding to \(j\), we have
		\(\max_i b_i(p)=b_j(p)\). Direct differentiation shows that the only
		possible interior critical point of \(b_j(p)\) is \(p=j/m\), where it
		attains a maximum. Hence its minimum on this interval is attained at an
		endpoint.
		\end{comment}
		This yields that the minimum of the expression  over all \(p\) is attained at
		\[
		p=\frac{j+1}{m+1},
		\qquad 0\leq j\leq m-1,
		\]
		where \(b_j(p)=b_{j+1}(p)\). Substitution gives
		\begin{equation}\label{eq:rho}
			\rho_m=
			\min_{0\leq j\leq m-1}
			\binom mj
			\left(\frac{j+1}{m+1}\right)^j
			\left(\frac{m-j}{m+1}\right)^{m-j}.    
		\end{equation}

		The values in this minimum are symmetric about the central indices, and
		comparison of consecutive values shows that they decrease towards the
		centre. Thus it attains its minimum at \(j=m/2+O(1)\), and
		Stirling's formula gives
		\[
		\rho_m=
		\left(\sqrt{\frac{2}{\pi}}+o(1)\right)m^{-1/2}
		\qquad\text{as }m\to\infty.
		\] 
	\end{remark}
	
	{For growing block sizes, we obtain an upper bound when distinct blocks have uniformly bounded intersections.}
	
	\begin{theorem}
		\label{thm:growing-upper}
		{Let \((V_n,\mathcal H_n)\) be a sequence of \(m_n\)-uniform block systems,
			with \(N_n=|\mathcal H_n|\). Suppose that \(m_n\to\infty\),
			\[
			\max_{\substack{H,H'\in\mathcal H_n\\H\neq H'}}|H\cap H'|=O(1)
			\]
			and
			\[
			m_n\sqrt{\log m_n}=o(N_n).
			\]
			Then
			\[
			B(\mathcal H_n)\leq
			\left(\sqrt{\frac{2}{\pi}}+o(1)\right)
			\frac{N_n}{\sqrt{m_n}}.
			\]}
	\end{theorem}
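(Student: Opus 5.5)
We take \(S\) random with each point included independently with probability \(p=1/2\). For a block \(H\), \(|H\cap S|\sim\mathrm{Bin}(m_n,1/2)\). Hence
\[
\mathbb E|\mathcal H_n(S,i)|=N_n\binom{m_n}{i}2^{-m_n}\le N_n\binom{m_n}{\lfloor m_n/2\rfloor}2^{-m_n}=\left(\sqrt{\tfrac{2}{\pi}}+o(1)\right)\frac{N_n}{\sqrt{m_n}}
\]
by Stirling's formula. The task is to show that, with positive probability, every \(|\mathcal H_n(S,i)|\) is at most this expectation plus \(o(N_n/\sqrt{m_n})\). Unlike in Theorem~\ref{thm:fixed-upper}, there are now \(m_n+1\to\infty\) values of \(i\), so a union bound needs a variance estimate with some room to spare.

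We split the indices. For \(|i-m_n/2|>A\sqrt{m_n\log m_n}\), with \(A\) a large constant, the Chernoff bound gives \(\Pr(|H\cap S|=i)\le m_n^{-2}\). The expected total number of blocks with such intersection sizes is therefore \(O(N_n m_n^{-1})=o(N_n/\sqrt{m_n})\), and Markov's inequality shows that with probability at least \(0.9\) all these classes are small. The remaining window contains \(O(\sqrt{m_n\log m_n})\) indices, and each one is handled by Chebyshev's inequality.

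The key step, and the main obstacle, is the covariance bound for a fixed \(i\). Let \(I_H=\mathbf 1_{\{|H\cap S|=i\}}\). Take distinct blocks \(H,H'\) with \(T=H\cap H'\) and \(|T|=t\le c=O(1)\). Condition on the configuration \(S\cap T\), which has \(j\) points. Given this, \(I_H\) and \(I_{H'}\) are independent, and each equals a binomial point probability \(\Pr(\mathrm{Bin}(m_n-t,1/2)=i-j)\). These point probabilities differ from the unconditional ones by \(O(1/m_n)\), because shifting a binomial by \(O(1)\) changes a point probability of size \(O(m_n^{-1/2})\) by \(O(m_n^{-1})\). Consequently
\[
\operatorname{Cov}(I_H,I_{H'})=O(m_n^{-3/2}),
\]
uniformly over \(i\).

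Disjoint pairs have zero covariance, so summing over all ordered pairs gives
\[
\operatorname{Var}|\mathcal H_n(S,i)|\le \sum_H\Pr(I_H=1)+N_n^2\,O(m_n^{-3/2})=O\!\left(\frac{N_n}{\sqrt{m_n}}\right)+O\!\left(\frac{N_n^2}{m_n^{3/2}}\right).
\]
Apply Chebyshev's inequality with deviation \(\varepsilon N_n/\sqrt{m_n}\). The failure probability for each \(i\) is
\[
O\!\left(\frac{\sqrt{m_n}}{\varepsilon^2N_n}+\frac{1}{\varepsilon^2\sqrt{m_n}}\right).
\]
We multiply this by the window size \(O(\sqrt{m_n\log m_n})\). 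The first term becomes \(O(m_n\sqrt{\log m_n}/N_n)=o(1)\), which is exactly where the hypothesis \(m_n\sqrt{\log m_n}=o(N_n)\) enters. The second term becomes \(O(\sqrt{\log m_n})\), which does not tend to zero.

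Fixing this requires sharpening the covariance estimate, and this is where we expect the real work. We would expand the conditional point probabilities to second order via the local central limit theorem. The first-order term is linear in the centred variable \(j-t/2\), which has mean zero, so it cancels after averaging over \(S\cap T\). What survives is the product of the two first-order deviations, which gives \(\operatorname{Cov}=O(m_n^{-2})\), uniformly in \(i\) and \(t\le c\). Then the second term becomes \(N_n^2/m_n^2\), and after Chebyshev and the window factor it contributes \(O(\sqrt{\log m_n}/(\varepsilon^2\sqrt{m_n}))=o(1)\).

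With this bound, the union bound over the window succeeds with probability \(1-o(1)\). Combined with the tail step, some set \(S\) satisfies \(M_{\mathcal H_n}(S)\le(\sqrt{2/\pi}+\varepsilon+o(1))N_n/\sqrt{m_n}\). Since \(\varepsilon>0\) is arbitrary, this gives the bound in Theorem~\ref{thm:growing-upper}.
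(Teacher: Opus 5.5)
Your proposal is correct and is essentially the paper's proof: \(p=1/2\), a Chernoff--Markov bound outside a window of width \(O(\sqrt{m_n\log m_n})\), and Chebyshev plus a union bound inside it. In both, the covariance of two intersecting blocks is controlled by conditioning on \(S\cap H\cap H'\), so that it equals the variance of the conditional point probability.

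Your sharpening step needs no local limit theorem. The conditional probabilities average exactly to \(\binom{m_n}{i}2^{-m_n}\), so the covariance is the mean square of their \(O(1/m_n)\) deviations, i.e.\ \(O(m_n^{-2})\). This is slightly better than the paper's \(O(\log m_n/m_n^2)\), which comes from relative deviations in the central range.
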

	
	\begin{proof} For the upper bound, choose \(S\subseteq V_n\) by including each point independently with probability \(1/2\). The proof follows the same strategy as that of Theorem~\ref{thm:fixed-upper}, but the growing block size requires a sharper covariance estimate in the central range and a separate treatment of the binomial tails. For every \(0\leq i\leq m_n\), a fixed block meets \(S\) in exactly \(i\) points with probability \(\binom{m_n}{i}2^{-m_n}\). Therefore,
		\[
		\mathbb E\bigl[|\mathcal H_n(S,i)|\bigr]
		=N_n\binom{m_n}{i}2^{-m_n}.
		\]
		Moreover,
		\[
		|\mathcal H_n(S,i)|
		=\sum_{H\in\mathcal H_n}\mathbf 1_{\{|H\cap S|=i\}},
		\]
		so its variance is the sum of the covariances of these indicators.
		
		We first consider the central values satisfying
		\(\left|i-m_n/2\right|\leq\sqrt{m_n\log m_n}\). Let \(H\) and \(H'\)
		be distinct blocks and write \(t=|H\cap H'|\). If exactly \(j\) points
		of \(H\cap H'\) belong to \(S\), then the conditional probability that
		\(H\) meets \(S\) in \(i\) points is
		\(\binom{m_n-t}{i-j}2^{-(m_n-t)}\). Since \(t=O(1)\),
		\(0\leq j\leq t\), and \(i\) lies in the central range,
		\[
		\frac{\binom{m_n-t}{i-j}2^{-(m_n-t)}}
		{\binom{m_n}{i}2^{-m_n}}
		=
		2^t\frac{\binom{m_n-t}{i-j}}{\binom{m_n}{i}}
		=
		1+O\left(\sqrt{\frac{\log m_n}{m_n}}\right),
		\]
		where the last estimate follows by writing
		\(i=m_n/2+O(\sqrt{m_n\log m_n})\) and expressing the ratio of the
		binomial coefficients as a product of \(t=O(1)\) factors. Once the
		choices in \(H\cap H'\) are fixed, the two events are independent.
		Their covariance is therefore the variance of the conditional probability
		above. Its relative deviation from
		\(\binom{m_n}{i}2^{-m_n}\) is
		\(O(\sqrt{\log m_n/m_n})\), and hence, uniformly for distinct \(H,H'\),
		\[
		\operatorname{Cov}\left(
		\mathbf 1_{\{|H\cap S|=i\}},
		\mathbf 1_{\{|H'\cap S|=i\}}
		\right)
		=
		O\left(
		\left(\binom{m_n}{i}2^{-m_n}\right)^2
		\frac{\log m_n}{m_n}
		\right)
		=
		O\left(\frac{\log m_n}{m_n^2}\right),
		\]
		where we used
		\(\max_i\binom{m_n}{i}2^{-m_n}=O(m_n^{-1/2})\). The diagonal terms in the variance sum contribute at most \(N_n\binom{m_n}{i}2^{-m_n}=O(N_n/\sqrt{m_n})\), while there are at most \(N_n^2\) off-diagonal terms. Hence
		\[
		\operatorname{Var}\bigl(|\mathcal H_n(S,i)|\bigr)
		=
		O\left(
		\frac{N_n}{\sqrt{m_n}}
		+\frac{N_n^2\log m_n}{m_n^2}
		\right)
		\]
		uniformly in the central range.
		
		There are \(O(\sqrt{m_n\log m_n})\) central values of \(i\). Chebyshev's inequality bounds the probability of a large deviation for each such \(i\), and the union bound adds these probabilities. The variance estimate above and the assumption \(m_n\sqrt{\log m_n}=o(N_n)\) therefore show that, with probability tending to \(1\), simultaneously for every central \(i\),
		\[
		|\mathcal H_n(S,i)|
		=
		N_n\binom{m_n}{i}2^{-m_n}
		+o\left(\frac{N_n}{\sqrt{m_n}}\right).
		\]
		
		It remains to consider the values outside the central range. For every block \(H\), the binomial tail bound gives
		\[
		\Pr\left(
		\left||H\cap S|-\frac{m_n}{2}\right|
		>\sqrt{m_n\log m_n}
		\right)
		\leq 2m_n^{-2}.
		\]
		Thus the expected total number of blocks whose intersection size lies outside this range is at most \(2N_n/m_n^2\). Markov's inequality shows that, with probability tending to \(1\), this number is at most \(N_n/m_n=o(N_n/\sqrt{m_n})\).
		
		Combining the central and tail estimates, there exists \(S\subseteq V_n\) such that
		\[
		M_{\mathcal H_n}(S)
		\leq
		N_n\max_{0\leq i\leq m_n}
		\left\{\binom{m_n}{i}2^{-m_n}\right\}
		+o\left(\frac{N_n}{\sqrt{m_n}}\right).
		\]
		Finally, Stirling's formula gives
		\[
		\max_{0\leq i\leq m_n}
		\left\{\binom{m_n}{i}2^{-m_n}\right\}
		=
		\left(\sqrt{\frac{2}{\pi}}+o(1)\right)m_n^{-1/2},
		\]
		which proves the result.
	\end{proof}
	\section{Applications: affine and projective spaces, inversive planes}
	\label{sec:application}
	\subsection[Projective planes]{{Projective planes}}
	
	{We first derive Theorem~\ref{leggyakoribb} from the general bounds.}
	
	\begin{proof}[{Proof of Theorem~\ref{leggyakoribb}}]
		{Let \(N=q^2+q+1\). The lines of \(\Pi_q\) form a balanced incomplete \(2\text{-}(N,q+1,1)\) design. Moreover,}
		\[
		{\ensuremath{(q+1)\frac{N-q-1}{N-1}=q.}}
		\]
		{Hence Theorem~\ref{thm:design-lower} gives}
		\[
		{\ensuremath{\frac{N}{\sqrt{3q+1}}\leq B(\mathcal L).}}
		\]
		{For the upper bound, apply Theorem~\ref{thm:growing-upper} to the lines of \(\Pi_q\). Here \(m_q=q+1\), \(N_q=q^2+q+1\), any two distinct lines intersect in one point, and \(m_q\sqrt{\log m_q}=o(N_q)\). Hence}
		\[
		{\ensuremath{B(\mathcal L)\leq
				\left(\sqrt{\frac{2}{\pi}}+o(1)\right)
				\frac{q^2+q+1}{\sqrt{q+1}}
				=\left(\sqrt{\frac{2}{\pi}}+o(1)\right)q^{3/2}.}}
		\]
		{This completes the proof.}
	\end{proof}
	{The variance computation underlying Theorem~\ref{thm:design-lower} also gives a more precise lower bound depending on the size of \(S\).}
	
	\begin{proposition}\label{prop:projective-lower}
		{Let \(\Pi_q\) be a projective plane of order \(q\), let \(N=q^2+q+1\), and let \(S\) be a set of points in \(\Pi_q\). Then}
		\[
		{\ensuremath{\frac{N^{3/2}}
				{\sqrt{12q|S|\left(1-\frac{|S|}{N}\right)+N}}
				\leq\max_k|L_k(S)|.}}
		\]
	\end{proposition}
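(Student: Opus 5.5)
We follow the proof of Theorem~\ref{thm:design-lower}, but keep the exact variance instead of its worst-case bound $m/4$. Fix $S$ and write $s=|S|$. Choose a line $\ell\in\cL$ uniformly at random and set $X=|\ell\cap S|$. The lines of $\Pi_q$ form a $2\text{-}(N,q+1,1)$ design with $N$ blocks. So by the computation in the proof of Theorem~\ref{thm:design-lower}, with $m=q+1$ and $v=N$,
\[
\operatorname{Var}(X)=(q+1)\frac{s}{N}\left(1-\frac{s}{N}\right)\frac{N-q-1}{N-1}.
\]
As observed in the proof of Theorem~\ref{leggyakoribb}, $(q+1)\frac{N-q-1}{N-1}=q$. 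Hence
\[
\operatorname{Var}(X)=\frac{q s}{N}\left(1-\frac{s}{N}\right).
\]

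Next, $\max_k\Pr(X=k)=\max_k|L_k(S)|/N$. Applying Lemma~\ref{lem:maximum-atom} gives
\[
\frac{N^2}{\max_k|L_k(S)|^2}\le 12\operatorname{Var}(X)+1=\frac{12qs(1-s/N)+N}{N}.
\]
Rearranging yields
\[
\max_k|L_k(S)|^2\ge \frac{N^3}{12qs(1-s/N)+N}.
\]
Taking square roots gives exactly the claimed bound.

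There is no real obstacle. The only points needing care are the algebraic simplification of the factor $(q+1)(N-q-1)/(N-1)$ to $q$, using $N-1=q(q+1)$ and $N-q-1=q^2$, and keeping the $+1$ term of Lemma~\ref{lem:maximum-atom} so that the denominator contains $+N$ after multiplying through by $N$.
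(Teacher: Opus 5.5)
Your proof is correct and is the same argument the paper gives. You compute the exact variance $\operatorname{Var}(X)=q\frac{|S|}{N}\bigl(1-\frac{|S|}{N}\bigr)$ from the $2\text{-}(N,q+1,1)$ design identities, then apply Lemma~\ref{lem:maximum-atom} with $\max_k\Pr(X=k)=\max_k|L_k(S)|/N$; your algebra, including $(q+1)\frac{N-q-1}{N-1}=q$ and keeping the $+1$ term, checks out.
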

	
	\begin{proof}
		{Choose a line uniformly at random and let \(X\) be its intersection size with \(S\). Since the lines form a \(2\text{-}(N,q+1,1)\) design, the variance computation in the proof of Theorem~\ref{thm:design-lower} gives}
		\[
		{\ensuremath{\operatorname{Var}(X)
				=q\frac{|S|}{N}\left(1-\frac{|S|}{N}\right).}}
		\]
		{Moreover, \(\max_k\Pr(X=k)=\max_k|L_k(S)|/N\), so Lemma~\ref{lem:maximum-atom} gives the result.}
	\end{proof}

	\subsection{Vector spaces, Affine spaces}
	For $\AG(n,q)$, let \(\mathcal H_n\) be the family of affine $k$-dimensional subspaces, or in other words, 
	\(k\)-flats.  Then
	$        v_n=q^n,  $   $  m=q^k,$
	and
	\[
	N_n
	=
	q^{n-k}{n\brack k}_q.
	\]
	
	The affine \(k\)-flats form a \(2\)-design: every point lies in
	\({n\brack k}_q\) affine \(k\)-flats, and every pair of distinct points
	lies in \({n-1\brack k-1}_q\) affine \(k\)-flats.  Moreover, the weak
	dependency condition holds.  Indeed, for a fixed \(k\)-flat \(F\), the
	number of \(k\)-flats meeting \(F\) is at most
	\[
	m{n\brack k}_q,
	\]
	because one may first choose a point of \(F\), and then choose a
	\(k\)-flat through that point.  Hence
	\[
	D_n
	\leq
	N_n m{n\brack k}_q
	=
	O(q^{2k-n})N_n^2
	=
	o(N_n^2),
	\]
	for fixed \(q\) and fixed \(k\). Since the block size \(m=q^k\) is then also
	fixed, 
	Theorems~\ref{thm:design-lower} and~\ref{thm:fixed-upper} give
	\[
	\frac{N_n}
	{\sqrt{3q^k\frac{q^n-q^k}{q^n-1}+1}}
	\leq
	B(\AG(n,q),k)
	\leq
	\left(\rho_{q^k}+o(1)\right)N_n.
	\]
	In particular,
	\[
	B(\AG(n,q),k)
	=
	\Theta(     q^{n-\frac{3}{2}k}{n\brack k}_q), 
	\]
	which proves Theorem \ref{affin_result}.
	
	\subsection{Projective spaces} For $\PG(n,q)$, let \(\mathcal H_n\) be the family of projective
	\(k\)-subspaces.  Then
	\[
	v_n=\frac{q^{n+1}-1}{q-1},
	\qquad
	m=\frac{q^{k+1}-1}{q-1},
	\]
	and
	\[
	N_n={n+1\brack k+1}_q.
	\]
	Again the projective \(k\)-subspaces form a \(2\)-design: every point
	lies in \({n\brack k}_q\) such subspaces, and every pair of distinct
	points lies in \({n-1\brack k-1}_q\) such subspaces.  The same
	intersection-counting argument gives
	\[
	D_n
	\leq
	N_n m{n\brack k}_q
	=
	o(N_n^2),
	\]
	for fixed \(q\) and fixed \(k\). Since the block size
	\(
	m=\frac{q^{k+1}-1}{q-1}
	\)
	is then also fixed,
	Theorems~\ref{thm:design-lower} and~\ref{thm:fixed-upper} give
	\[
	\frac{N_n}
	{\sqrt{3m\frac{v_n-m}{v_n-1}+1}}
	\leq
	B(\PG(n,q),k)
	\leq
	\left(\rho_m+o(1)\right)N_n,
	\]
	where \(m=(q^{k+1}-1)/(q-1)\).

	Consequently, 
	\[
	B(\PG(n,q),k)=\Theta(q^{-k/2} {n+1\brack k+1}_q).
	\]
	More precisely, when \(m\) is fixed,
	\[
	\frac{1}{\sqrt{3m+1}}N_n-o(N_n)
	\leq
	B(\PG(n,q),k)
	\leq
	\left(\rho_m+o(1)\right)N_n,
	\] which proves Theorem \ref{proj_result}.
	
	\subsection{Inversive planes}
	
	An inversive or Möbius plane of order \(q\) is an incidence structure
	\(\mathcal I=(V,\mathcal C)\) with \(|V|=q^2+1\), whose blocks, called
	circles, have size \(q+1\), and in which every three distinct points are
	contained in exactly one circle. Thus \(\mathcal I\) is a
	\(3\text{-}(q^2+1,q+1,1)\) design and hence also a
	\(2\text{-}(q^2+1,q+1,q+1)\) design. In particular,
	\[
	v=q^2+1,\qquad m=q+1,\qquad N=|\mathcal C|=q(q^2+1).
	\]
	Therefore Theorem~\ref{thm:design-lower} gives
	\[
	\frac{q(q^2+1)}{\sqrt{3q+1-\frac{3}{q}}} \leq B(\mathcal C).
	\]
	
	Unlike in the affine and projective space applications above, here the
	block size \(m=q+1\) tends to infinity as \(q\to\infty\).
	Moreover, two distinct circles meet in at most two points. Thus
	Theorem~\ref{thm:growing-upper} applies as \(q\to\infty\), since
	\[
	(q+1)\sqrt{\log(q+1)}=o\bigl(q(q^2+1)\bigr),
	\]
	and yields
	\[
	B(\mathcal C)\leq
	\left(\sqrt{\frac{2}{\pi}}+o(1)\right)
	\frac{q(q^2+1)}{\sqrt{q+1}}.
	\]
	Consequently,
	\[
	\left(\frac{1}{\sqrt3}+o(1)\right)q^{5/2}
	\leq B(\mathcal C)\leq
	\left(\sqrt{\frac{2}{\pi}}+o(1)\right)q^{5/2}.
	\]

	\section[Refinements for small parameters $m$]{{Refinements} for small parameters}
	\label{sec:ref}
	%\todo{ha elfogadjuk az egyszerusiteseket akkor frissiteni kell a tablazatot is}
	We now examine the bounds obtained by Theorems~\ref{thm:design-lower} and~\ref{thm:fixed-upper}
	for small values of the field order \(q\) and the subspace dimension
	\(k\). Throughout the remainder of this section, \(q\) and \(k\) are
	fixed and \(n\to\infty\). Table~\ref{tab:small-parameters} records the
	general bounds for \(B(\mathcal H_n)/N_n\), together with the improvements on the respective lower bound, established
	below. The bounds supplied by these two theorems depend
	on \(q\) and \(k\) only through the block size \(m\).
	
	\begin{table}[ht]
		\centering
		\small
		\setlength{\tabcolsep}{9pt}
		\begin{tabular}{lccccc}
			\hline
			space and subspaces & \(m\) & general lower & general upper, $\rho$ & refinement on the lower bound  & exact?\\
			\hline
			\(\AG(n,2)\), lines
			& \(2\)
			& \(1/\sqrt7\approx0.378\)
			& \(4/9\approx0.444\)
			&  \(4/9\approx0.444\)& \checkmark \\
			
			\(\AG(n,3)\), lines
			& \(3\)
			& \(1/\sqrt{10}\approx0.316\)
			& \(3/8=0.375\)
			&  \(1/3+\nu_{\rm FL}\approx0.333334\) & -- \\
			
			\(\PG(n,2)\), lines
			& \(3\)
			& \(1/\sqrt{10}\approx0.316\)
			& \(3/8=0.375\)
			&  \(1/3\approx0.333\) & \checkmark \\
			
			\(\AG(n,2)\), planes
			& \(4\)
			& \(1/\sqrt{13}\approx0.277\)
			& \(216/625\approx0.346\)
			&  \(\beta_4\approx0.324\) & --\\
			
			\(\PG(n,3)\), lines
			& \(4\)
			& \(1/\sqrt{13}\approx0.277\)
			& \(216/625\approx0.346\)
			& -- & --\\
			
			\(\PG(n,2)\), planes
			& \(7\)
			& \(1/\sqrt{22}\approx0.213\)
			& \(35/128\approx0.273\)
			& -- & -- \\
			
			\(\AG(n,2)\), \(3\)-flats
			& \(8\)
			& \(1/5=0.200\)
			& \(\approx0.260\)
			&  \(\beta_8\approx0.209\) & --\\
			
			\(\AG(n,3)\), planes
			& \(9\)
			& \(1/\sqrt{28}\approx0.189\)
			& \(63/256\approx0.246\)
			& -- & -- \\
			\hline
		\end{tabular}
		\caption{General and improved asymptotic bounds for small parameter
			values. Each entry is the coefficient of \(N_n\).}
		\label{tab:small-parameters}
	\end{table}

	The nontrivial entries in the second-to-last column use additional information not
	captured by the general variance argument. We discuss these refinements
	separately.
	
	{In the arguments below, for \(S\subseteq V_n\) put
		\(\alpha=|S|/v_n\), and let \(x_i\) denote the proportion of blocks
		meeting \(S\) in exactly \(i\) points. For a \(2\text{-}(v_n,m,\lambda)\)
		design, the following moment equations hold:
		\[
		\sum_{i=0}^m x_i=1,\qquad
		\sum_{i=0}^m i x_i=m\alpha,\qquad
		\sum_{i=0}^m i(i-1)x_i
		=m(m-1)\frac{|S|(|S|-1)}{v_n(v_n-1)}
		=m(m-1)\alpha^2+o(1).
		\]
		The first equation says that the proportions sum to \(1\), while the
		second and third follow by double counting incidences with points of
		\(S\) and ordered pairs of distinct points of \(S\), respectively.}

	\subsection{Affine subspaces over
		\texorpdfstring{\(\mathbb F_2\)}{F2}}
	
	Let \(S\subseteq\mathbb F_2^n\) have density \(\alpha\) {(i.e. \(\alpha=|S|/2^n\))}, and let \(x_i\)
	denote the proportion of affine \(k\)-flats meeting \(S\) in exactly
	\(i\) points. 
	
	For \(k=1\), affine lines are simply unordered pairs of
	points, and hence
	\[
	x_0=(1-\alpha)^2+o(1),\qquad
	x_1=2\alpha(1-\alpha)+o(1),\qquad
	x_2=\alpha^2+o(1).
	\]
	{The expression is unchanged when \(\alpha\) is replaced by
		\(1-\alpha\), so we may assume \(\alpha\leq1/2\). In this range,
		\((1-\alpha)^2\geq\alpha^2\), while the decreasing term
		\((1-\alpha)^2\) meets the increasing term \(2\alpha(1-\alpha)\) at
		\(\alpha=1/3\). Hence}
	\[
	\min_{0\leq\alpha\leq1}
	\max\{(1-\alpha)^2,2\alpha(1-\alpha),\alpha^2\}=\frac49.
	\]
	The minimum is attained at \(\alpha=1/3\) and \(\alpha=2/3\), so
	\[
	B(\AG(n,2),1)=\left(\frac49+o(1)\right)N_n.
	\]
	
	{For \( k\ge 2\), we obtain further improvements by counting
		\(k\)-flats contained entirely in a set. Let
		\(A\subseteq\mathbb F_2^n\) with \(\alpha=|A|/2^n\).
		The
		proportion of affine \(k\)-flats contained in \(A\) is at least
		\(\alpha^{2^k}-o(1)\) which follows from the well-known cube supersaturation result of Gowers \cite[Lemma 3.7]{Gowers01}.} Applying this to \(S\) and its complement, and
	writing \(m=2^k\), gives
	\[
	\ensuremath{\alpha^m-o(1)\leq x_m,\qquad
		(1-\alpha)^m-o(1)\leq x_0}.
	\]
	Together with the standard moment equations
	\[
	\sum_{i=0}^m x_i=1,\qquad
	\sum_{i=0}^m i x_i=m\alpha+o(1),\qquad
	\sum_{i=0}^m i(i-1)x_i=m(m-1)\alpha^2+o(1),
	\]
	{for each fixed \(\alpha\),}
	these inequalities give a finite linear program. Let \(\beta_m\) be the
	minimum possible value of \(\max_i x_i\) over all
	\(0\leq\alpha\leq1\) and all nonnegative \(x_0,\ldots,x_m\) satisfying
	\[
	\sum_i x_i=1,\qquad
	\sum_i i x_i=m\alpha,\qquad
	\sum_i i(i-1)x_i=m(m-1)\alpha^2,
	\]
	and
	\[
	\ensuremath{(1-\alpha)^m\leq x_0,\qquad \alpha^m\leq x_m}.
	\]
	Then
	\[
	\ensuremath{\beta_m\leq\liminf_{n\to\infty}\frac{B(\AG(n,2),k)}{N_n}}.
	\]
	Numerically, via linear programming, one can obtain  %\todo{ez igy szerintem nagyon hirtelen jon, most akkor az olvasonak fejben meg kene oldania ezt a lin. pr. feladatot? :)}
	\[
	\beta_4>0.324,\qquad \beta_8>0.208.
	\]

	\subsection{Three-point lines in
		\texorpdfstring{\(\AG(n,3)\) and \(\PG(n,2)\)}{AG(n,3) and PG(n,2)}}
	
	The line systems of both \(\AG(n,3)\) and \(\PG(n,2)\) are
	\(2\text{-}(v_n,3,1)\) designs. We first give a common lower bound for
	the two cases. 
	
	\medskip
	\subsubsection{ A common lower bound} For \(S\subseteq V_n\), let \(x_i\) be the proportion of
	lines meeting \(S\) in exactly \(i\) points, and put
	\(\alpha=|S|/v_n\). The standard moment equations give
	\[
	x_0+x_1+x_2+x_3=1,\qquad
	x_1+2x_2+3x_3=3\alpha,\qquad
	x_2+3x_3=3\alpha^2+o(1).
	\]
	Hence
	\[
	x_0+x_1=1-3\alpha^2+2x_3+o(1),\qquad
	x_1+x_2=3\alpha-3\alpha^2+o(1).
	\]
	Replacing \(S\) by its complement if necessary, we may assume
	\(0\leq\alpha\leq1/2\). If \(\alpha\leq1/3\), then
	\(\frac23-o(1)\leq x_0+x_1\), while if
	\(1/3\leq\alpha\leq1/2\), then
	\(\frac23-o(1)\leq x_1+x_2\). Thus in both cases
	\(\frac13-o(1)\leq\max_i x_i\).
	
	For affine lines this gives
	\[
	\left(\frac13-o(1)\right)N_n
	\leq B(\AG(n,3),1).\]
	%\leq\left(\frac38+o(1)\right)N_n,
	%where the upper bound follows from Theorem~\ref{thm:fixed-upper}. 
	
	For the projective case, this similarly gives
	\[
	\left(\frac13-o(1)\right)N_n
	\leq B(\PG(n,2),1).\]
	%\leq\left(\frac38+o(1)\right)N_n.\]
	%The exact asymptotic constant remains open.
	
	\medskip
	\subsubsection{ The lower bound is sharp in case of PG$(n, 2)$, for projective lines.} % For projective lines the lower bound is sharp.
	Fix a hyperplane
	\(H\) of \(\PG(n,2)\), and let
	\(C\) be the complement of the hyperplane \(H\), i.e.,
	\(
	C=\PG(n,2)\setminus H
	\) is isomorphic to \(\AG(n,2)\).
	Thus \(C\)  contains no projective
	line. Indeed, every line is either contained in \(H\), in which case it
	has no point in \(C\), or meets \(H\) in exactly one point, in which
	case its other two points lie in \(C\). The proportion of lines
	contained in \(H\) is
	\[
	\frac{{n\brack2}_2}{{n+1\brack2}_2}
	=\frac14+o(1),
	\]
	so a proportion \(3/4+o(1)\) of the lines meet \(C\) in two points.
	
	Choose \(S\subseteq C\) with
	\(|S|=(2/3+o(1))|C|\). Since the lines not contained in \(H\)
	correspond to the unordered pairs of points of \(C\), the proportions
	of these lines meeting \(S\) in \(0,1,2\) points are, respectively,
	\[
	\frac19+o(1),\qquad \frac49+o(1),\qquad \frac49+o(1).
	\]
	It follows that
	\[
	x_0=\frac14+ \frac34\cdot \frac19+o(1)=\frac13+o(1),\qquad
	x_1=\frac34\cdot \frac49+o(1)=\frac13+o(1),\qquad
	x_2=\frac34\cdot\frac49+o(1)=\frac13+o(1),
	\]
	while \(x_3=0\). Consequently,
	\[
	B(\PG(n,2),1)
	=\left(\frac13+o(1)\right)N_n.
	\]
	
	\subsubsection{The lower bound \(1/3\) is not sharp in the case of AG\((n,3)\)  }
	{We retain the notation introduced in the common lower-bound argument
		above. For affine lines, the supersaturation theorem of Fox and Lovász
		\cite[Theorem~3]{FoxLovaszATR} gives the additional bound
		\[
		\alpha^{C_3}-o(1)\leq x_3,
		\]
		where \(C_3\approx13.901\) denotes the exponent appearing in their
		theorem. Let
		\[
		e=3^{-C_3}.
		\]
		Since replacing \(S\) by its complement sends \(x_i\) to \(x_{3-i}\),
		we may assume \(0\leq\alpha\leq1/2\). We claim that
		\[
		\frac13+e-24e^2-o(1)\leq\max_i x_i.
		\]
		
		If \(0\leq\alpha\leq1/3\), then the function
		\(1-3\alpha^2+2\alpha^{C_3}\) is decreasing on this interval. Hence
		\[
		\frac23+2e-o(1)
		\leq1-3\alpha^2+2\alpha^{C_3}-o(1)
		\leq x_0+x_1,
		\]
		and therefore
		\[
		\frac13+e-o(1)\leq\max_i x_i.
		\]
		
		It remains to consider \(\alpha=1/3+h\), where
		\(0\leq h\leq1/6\). The moment equations also give
		\[
		x_1=3\alpha-6\alpha^2+3x_3+o(1).
		\]
		If \(0\leq h\leq2e\), then \(\alpha^{C_3}\geq e\), and hence
		\[
		\frac13+e-24e^2-o(1)
		\leq\frac13-h-6h^2+3e-o(1)
		\leq x_1.
		\]
		Finally, if \(2e\leq h\leq1/6\), then
		\[
		\frac13+e-6e^2-o(1)
		\leq\frac13+\frac h2-\frac{3h^2}{2}+o(1)
		=\frac{x_1+x_2}{2}
		\leq\max_i x_i.
		\]
		
		Thus, with
		\[
		\nu_{\mathrm{FL}}
		:=e-24e^2
		\approx2.3308573268\cdot10^{-7},
		\]
		we obtain
		\[
		\left(\frac13+\nu_{\mathrm{FL}}-o(1)\right)N_n
		\leq B(\AG(n,3),1).
		\]
		The exact asymptotic constant remains open.}
	
	\section[Explicit constructions in affine planes of prime order] {Explicit constructions in affine planes of prime order}
	\label{sec:constructions in affine planes}
	
	In this section we present three explicit constructions, which  although not as strong as the random construction, we expect to be close to optimal.
	
	The affine plane constructions below also give constructions for the original problem in Desarguesian projective planes. Indeed, we may identify \(\AG(2,p)\) with \(\PG(2,p)\setminus\ell_\infty\). Every affine line has a unique projective completion, with the same intersection size with \(S\subseteq\AG(2,p)\), while the only additional projective line is \(\ell_\infty\), which is a \(0\)-secant of \(S\). Thus the affine and projective intersection size distributions differ by only one line, and in particular have the same asymptotic bounds. In higher dimensions this last observation no longer holds in the same form, since the hyperplane at infinity contains many subspaces.
	
	\subsection[Parabola-based constructions and character sums]{ {Parabola-based constructions and character sums}}
	
	In a finite field $\F_p$ of order $p$ where $ p$ is a prime, every element uniquely corresponds to an integer in the interval $[0, p-1]$. This naturally induces an order relation $<$ on $\F_p$.
	
	%The first was investigated by Adriaensen and the second author \cite{AW}, and we conjecture that is off from the upper bound on $\min_{S\subseteq \Pi_q} s_m(S) $ by at most a logarithmic factor $\log(q)$.
	In  our constructions below, we will assume that $p > 3$. %\todo{igazabol $p>2$, csak akkor az $\alpha = 1/4$-el picit magyarazkodni kell.} 
	
	\begin{construction}[Points under the parabola]\label{parabola}
		Let  $\alpha \in \F^*_p$, $\beta$, $\gamma \in \F_p$, and let $S$ be defined as %a set of points in $\AG(2, p)$
		\[
		S := \{(x, y) \in \F_p^2 : \alpha x^2 + \beta x + \gamma  > y \} .
		\]% \todo{rel.jel!}
	\end{construction}
	
	In \cite{AW}, Adriaensen and Weiner studied the above point set and proved the next result. They defined the projection function of $S$ from direction $(d) \in \F_p$ to be $\pr_{S,d}:\F_p \to \NN$ that maps $b \in \F_p$ to the number of points of $S$ on the line $Y=dX+b$, i.e., the secant size: $\pr_{S,d}(b)=|S\cap [Y=dX+b]|$.
	Also, let $\chi(x)$ denote the Legendre symbol.%, i.e, $\chi(0)=0$ when $x=0$, $\chi(x)=1$ if $x \neq 0$ is a square, $\chi(x)=-1$ if $x$ is a non-square.
	%\todo{ezt a magya\-rázatot leszedtem.}
	\begin{result}
		\label{res: parabola}
		Let $p>3$ prime. For the set $S$ of Construction \ref{parabola} and for any $d \in \F_p^*$ and $b \in \F_p$, 
		\begin{itemize}
			\item[(1)]{the projection function $\pr_{S,d}$ is a cyclic shift of $\pr_{S,1}$;}
			\item[(2)]{the image of $\pr_{S,d}$ is an interval whose length lies between $\frac{\sqrt p}{2 \pi}$ and $\sqrt p \ln(p)$;}
			\item[(3)]{
				$\pr_{S,d}(b+1)-\pr_{S,d}(b) = -\chi( (\beta-d)^2 + 4\alpha(b+1-\gamma) )$.
			} In particular, $|\pr_{S,d}(b+1)-\pr_{S,d}(b)|\le 1$.
		\end{itemize}
	\end{result}
	
	Note that this result implies that apart from the number of points of $S$ on the line at infinity and the vertical and horizontal lines, the possible intersection sizes on the remaining lines form an interval of length $O(\sqrt p \log p)$. This is consistent with the behavior suggested by the lower bound argument in Theorem \ref{leggyakoribb}. In order to bound the most frequent intersection number, we need to understand the frequency pattern of the lines of one parallel class. Hence, let us define prefix sums
	\[
	\Psi(a, t) = \sum_{j=0}^t \chi(a+j)
	\] for $t\in \{0,1,\ldots, p-1\}$.

	For now, let $\alpha = 1 /4 $, $\beta = \gamma = 1$; and so let $S$ be the points "under" the parabola $y=\frac {1}{4}x^2 + x + 1$ and let us take a closer look at the intersection numbers of the parallel class with slope $1$. Result \ref{res: parabola} gives $$\pr_{S,1}(b+1)-\pr_{S,1}(b) = -\chi(b).$$ Iterating this identity gives 
	\[
	\pr_{S,1}(t+1)
	=
	\pr_{S,1}(0)-\sum_{j=0}^{t}\chi(j)
	=
	\pr_{S,1}(0)-\Psi(0,t)
	\]
	
	for \(0\leq t\leq p-1\). Here \(t=p-1\) corresponds to the intercept
	\(b=0\) modulo \(p\), since \(\sum_{j=0}^{p-1}\chi(j)=0\).
	Since $\pr_{S,d}$ is a cyclic shift of $\pr_{S,1}$, in particular, $\pr_{S,d}(b)=\pr_{S,1}(b+(d-1)^2)$ in our choice for the parabola, we obtain the following corollary.
	%Specifically, the number of lines in a parallel class having a specific intersection size $k$ corresponds to the frequency of a particular value h in the sequence of prefix sums $\Psi(a,t)$. 
	\begin{corollary} \label{cor:parabola_freq}
		Let $S$ be the point set defined by the parabola $y = \frac{1}{4}x^2 + x + 1$ in $\AG(2,p)$ as in Construction \ref{parabola}. 
		For each \(k\), let \(L_k^*(S)\) denote the set of affine
		\(k\)-secants of \(S\) that are neither vertical nor horizontal. Then
		$$ |L_k^*(S)| = (p-1) \cdot |\{t \in \mathbb{F}_p : \Psi(0, t) = \pr_{S,1}(0) - k\}|, $$ 
		where $\Psi(a,t) = \sum_{j=0}^{t} \chi(a+j)$ is the prefix sum of the Legendre symbols. 
	\end{corollary}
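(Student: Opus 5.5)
We split the non-vertical, non-horizontal affine lines into their $p-1$ parallel classes $Y=dX+b$ with $d\in\F_p^*$. In each class there are $p$ lines, indexed by the intercept $b\in\F_p$. Within a fixed class, the number of $k$-secants is $|\{b\in\F_p:\pr_{S,d}(b)=k\}|$. Hence
\[
|L_k^*(S)|=\sum_{d\in\F_p^*}|\{b\in\F_p:\pr_{S,d}(b)=k\}|.
\]

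The first step is to show that every summand equals the one for $d=1$. By part (1) of Result~\ref{res: parabola}, $\pr_{S,d}$ is a cyclic shift of $\pr_{S,1}$; for our parabola this is explicitly $\pr_{S,d}(b)=\pr_{S,1}(b+(d-1)^2)$. Since $b\mapsto b+(d-1)^2$ is a bijection of $\F_p$, the number of $b$ with $\pr_{S,d}(b)=k$ does not depend on $d$. This yields the factor $p-1$, and it remains to count $|\{b\in\F_p:\pr_{S,1}(b)=k\}|$.

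The second step uses the iterated identity derived just before the corollary from part (3) of Result~\ref{res: parabola}. There, $(\beta-1)^2+4\alpha(b+1-\gamma)=b$, so $\pr_{S,1}(b+1)-\pr_{S,1}(b)=-\chi(b)$. Summing gives $\pr_{S,1}(t+1)=\pr_{S,1}(0)-\Psi(0,t)$ for $0\le t\le p-1$.

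The map $t\mapsto t+1 \pmod p$ is a bijection from $\{0,\dots,p-1\}$ onto $\F_p$; the value $t=p-1$ goes to $b=0$. The identity is consistent at this wrap-around point because $\Psi(0,p-1)=\sum_{j}\chi(j)=0$. Therefore $\pr_{S,1}(t+1)=k$ holds exactly when $\Psi(0,t)=\pr_{S,1}(0)-k$, and counting over $t\in\F_p$ gives the claimed formula.

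The only delicate point is this wrap-around bookkeeping: we must check that reducing indices modulo $p$ matches the prefix-sum range $0\le t\le p-1$, and that the shift $(d-1)^2$ is computed correctly for $\alpha=1/4$, $\beta=\gamma=1$. The remaining steps are direct substitutions into Result~\ref{res: parabola}.
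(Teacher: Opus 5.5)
Your proposal is correct and follows the paper's own argument. The cyclic shift $\pr_{S,d}(b)=\pr_{S,1}(b+(d-1)^2)$ gives the factor $p-1$, and iterating part (3) of the Adriaensen--Weiner result with $d=1$ gives $\pr_{S,1}(t+1)=\pr_{S,1}(0)-\Psi(0,t)$, with the wrap-around at $t=p-1$ handled by $\sum_{j}\chi(j)=0$, just as in the paper.
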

	
	The P\'olya--Vinogradov inequality (see e.g.~\cite{MontgomeryVaughan}) provides the $\sqrt p \ln p$ upper bound on the range of this character sum, while the results of S\'{a}rk\"{o}zy \cite{Sarkozy:77} and Sokolovski\u{\i} \cite{Sokolovskii:82} provide the $\frac{\sqrt p}{2 \pi}$ lower bound; Result \ref{res: parabola} $(2)$ is a consequence of these bounds. 
	One may view the prefix sums as "random" walk on the integers, except that the increments are prescribed by the Legendre symbol. 
	The above results bound the range of the walk, expressing that character sums behave rather similarly to random walks in some aspects. However, for our purposes we need to control how often the walk attains a given value, since this frequency determines the most common secant size. This leads us to the following conjecture:
	
	\begin{conj}[Prefix sum recurrences]\label{returns}
		Let $\Psi(a,t)$ be defined via the Legendre symbol modulo a prime $p$.
		Then $$| \{t: \Psi(a,t)=0\} |= O(\sqrt{p}\log^A p),$$ independent of the value of $a\in \F_p$ for an absolute constant $A>0$.
		A stronger related question is whether
		\[
		\max_{h\in\mathbb Z}
		\bigl|\{\,0\le t\le p-1:\Psi(0,t)=h\,\}\bigr|=
		O( \sqrt p\,(\log p)^A)
		\]
		for some absolute constant $A>0$?
	\end{conj}

	If  Conjecture \ref{returns} holds, it shows that  Construction \ref{parabola} is only a logarithmic factor off from the lower bound. For further details on the statistical behavior of non-principal Dirichlet characters and their sums, we refer to \cite{char_sum}.

	The next construction was suggested by Tam\'as Sz\H onyi. Interestingly, bounding its most frequent intersection size again leads to the study of prefix-sum recurrences. 
	
	\begin{construction}[Family of parabolas]\label{parabola2}
		Let \(c\in(0,1)\) be fixed constant (independent of \(p\)), let \(p>2\) be a prime,
		and put  $a=\lfloor cp\rfloor \in \F^*_p$ and let $S$ be the following set in $\AG(2, p)$.
		\[
		S:=\{(x,x^2+t): x\in\mathbb F_p,\ 0\le t<a\}\subset \AG(2,p).
		\]
	\end{construction}
	
	For any non-vertical line $\ell: y = mx + b$, the number of its intersection points with a single parabola $y = x^2 + t$ is given by $1 + \chi(m^2 - 4(t - b))$, which corresponds to the number of solutions to the quadratic equation $x^2 - mx + (t - b) = 0$ over $\mathbb{F}_p$. Let $c\in (0,1)$ be a fixed constant (independent of $p$). Summing over \(0\leq t<a=\lfloor cp\rfloor\), the total intersection size $n_{\ell}$ is
	\begin{equation}
		n_{\ell} = \sum_{t=0}^{a-1} (1 + \chi(m^2 + 4b - 4t)) = a + \sum_{t=0}^{a-1} \chi((m^2 + 4b) - 4t).
	\end{equation}
	
	Let us introduce $$\Phi(u,a):=\sum_{t=0}^{a-1} \chi(u-t).$$
	
	For fixed slope $m$, varying the intercept $b$ shifts the initial point of the character sum in the above equation. Hence, after a suitable transformation, we get that within a given parallel class, the multiplicity of an intersection size $k$ is determined by how often the prefix sum $\Phi$ attains the value $k - a$. %So again, if Conjecture \ref{returns} holds, the most frequent intersection size of Construction \ref{parabola2} would only be a logarithm factor off from the lower bound. 
	{So again, the most frequent intersection size of Construction \ref{parabola2} depends on the distribution of character sums, this time of fixed length $a$,  $\Phi(u,a)=\sum_{t=0}^{a-1} \chi(u - t)$ for $u\in \F_p$. For  recent results on such character sums over moving intervals, see \cite{Harper1, Huss}.}

	\subsection[An elliptic-curve construction]{{An elliptic-curve construction}} %of a set where the most common intersection multiplicity size appears O$(p^{3/2 + \epsilon})$ times}
	
	To bound the frequency of the most common intersection size for the set $S$ in the next example, we first recall some key results on elliptic curves; for more details see Lenstra \cite{Lenstra}. For \(a,b\in \mathbb F_p\) satisfying \(4a^3+27b^2\neq 0\), let \(E_{a,b}\) (or simply $E$) denote the elliptic curve given by the Weierstrass equation
	
	\[
	Y^2 = X^3 + aX + b.
	\]
	
	The set of $\mathbb{F}_p$-rational points of $E$, that is $E( \mathbb{F}_p )$, is the set of points of PG$(2, p)$ satisfying the homogeneous equation:
	\[
	E(\F_p) = \left\{(x, y, z) \in \PG(2, p) : y^2z = x^3+ axz^2 + bz^3\right\}
	\]

	Note that $E(\F_p)$ always contains exactly one point with $z=0$. By Hasse's Theorem \cite{Hasse1, Hasse2}, the number of points is given by $|E(\mathbb{F}_p)| = p + 1 - t$, where $|t| \leq 2\sqrt{p}$.
	Two elliptic curves $E_{a,b}$ and $E_{a', b'}$ over $F_p$ are isomorphic if there exists $u \in \F_p^*$ such that $a' = u^4a$ and $b'=u^6b$. Following \cite{Lenstra}, we use the notation $\# '$ to denote a sum over isomorphism classes where each class $E$ is weighted by $1/|\mathrm{Aut}_{\F_p}(E)|$, where $\mathrm{Aut}_{\F_p}(E)$ denotes the group of automorphisms of $E$ defined over $\F_p$. 
	
	\begin{result}[\cite{Lenstra}, Proposition 1.9 (a)]
		Let $p > 3$ be a prime. There exists a constant $c$ such that for any set of integers $N$ where $|n - (p + 1)| \leq 2\sqrt{p}$ for all $n \in N$, we have:
		\[
		\# ' \{ \text{isomorphism classes } E: |E(\mathbb{F}_p)| \in N \} \leq c \cdot |N| \cdot \sqrt{p} \log p (\log \log p)^2.
		\]
	\end{result}
	
	Since $2 \leq |\mathrm{Aut}_{\mathbb{F}_p}(E)| \leq 6$ for $p > 3$, each weight is at least $1/6$. Furthermore, each isomorphism class corresponds to at most $(p-1)/2$ distinct pairs $(a, b)$. This leads to the following corollary.
	
	\begin{corollary}
		\label{cor:lenstra}
		Let \(p>3\) be a prime and let \(N\) be a set of integers such that $$|n-(p+1)|\le 2\sqrt p$$ for all \(n\in N\). Then the number of elliptic curves \(E_{a,b}\) for which \(|E_{a,b}(\mathbb F_p)|\in N\) is
		$
		O(|N| p^{3/2} \log p (\log \log p)^2).
		$
	\end{corollary}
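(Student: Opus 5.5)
The plan is to pass from the weighted count over isomorphism classes in Lenstra's result to the count of Weierstrass pairs \((a,b)\in\F_p^2\) with \(4a^3+27b^2\neq0\). This conversion has two ingredients: the size of each isomorphism class, and the weight attached to it.

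First I will describe a single class. By the stated criterion, \(E_{a,b}\cong E_{a',b'}\) exactly when \((a',b')=(u^4a,u^6b)\) for some \(u\in\F_p^*\). So the pairs isomorphic to a given \(E_{a,b}\) form the orbit of \((a,b)\) under the action \(u\cdot(a,b)=(u^4a,u^6b)\) of \(\F_p^*\). The stabilizer of \((a,b)\) contains \(\{\pm1\}\), since \((\pm1)^4=(\pm1)^6=1\). It is in fact the group of \(u\) with \(u^4a=a\) and \(u^6b=b\), and this group is \(\mathrm{Aut}_{\F_p}(E)\).

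The orbit–stabilizer theorem then gives an orbit size of \((p-1)/|\mathrm{Aut}_{\F_p}(E)|\le (p-1)/2\). This is the claim already made in the text before the corollary. Isomorphic curves have the same number of rational points, so the condition \(|E_{a,b}(\F_p)|\in N\) depends only on the isomorphism class.

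Next I will compare the two counts. Let \(\mathcal C\) be the set of isomorphism classes \(E\) with \(|E(\F_p)|\in N\). Then
\[
\#\{(a,b): |E_{a,b}(\F_p)|\in N\}=\sum_{E\in\mathcal C}\frac{p-1}{|\mathrm{Aut}_{\F_p}(E)|}=(p-1)\,\#'\mathcal C .
\]
This identity holds exactly, because the orbit size is precisely \(p-1\) times the Lenstra weight. If one prefers not to use the identification of stabilizers with automorphism groups, a cruder route also works. Each orbit has at most \((p-1)/2\) elements, and each weight is at least \(1/6\). Hence the number of classes is at most \(6\,\#'\mathcal C\), and the number of pairs is at most \(3(p-1)\,\#'\mathcal C\).

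Either way, Lenstra's bound \(\#'\mathcal C\le c|N|\sqrt p\log p(\log\log p)^2\) applies, since every \(n\in N\) satisfies the Hasse-range hypothesis. This gives \(O(|N|p^{3/2}\log p(\log\log p)^2)\) pairs, which is the statement of Corollary~\ref{cor:lenstra}.

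The only point requiring care is the claim that the isomorphism classes of curves in Weierstrass form \(E_{a,b}\) are exactly these \(\F_p^*\)-orbits. Over \(\F_p\) with \(p>3\), every isomorphism between such short Weierstrass models has the form \((x,y)\mapsto(u^2x,u^3y)\), so this is standard. For the crude bound, only the inequality "orbit size \(\le (p-1)/2\)" is needed, and that inequality is immediate from \(\pm1\) lying in the stabilizer. I do not expect any real obstacle.
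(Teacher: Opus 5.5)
Your proposal is correct and is essentially the paper's argument. The paper bounds each isomorphism class by at most $(p-1)/2$ pairs $(a,b)$ and each weight below by $1/6$, which is exactly your cruder route giving at most $3(p-1)\,\#'\mathcal C$ pairs. Your orbit--stabilizer identification of the class size as exactly $(p-1)/|\mathrm{Aut}_{\F_p}(E)|$ is a valid sharpening: it yields the identity $(p-1)\,\#'\mathcal C$ and improves the constant from $3$ to $1$, but it is not needed for the $O(\cdot)$ statement.
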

	
	Finally, we can prove the following theorem.
	\begin{theorem}
		Let $p > 3$ be a prime. In $\AG(2, p)$, let $S$ be as follows:
		\[
		S := \left\{ (x, v) \in \mathbb{F}_p \times \mathbb{F}_p : \exists y \in \mathbb{F}_p \text{ such that } x^3 - v = y^2 \right\}. 
		\]
		
		Then the number of occurrences of the most frequent intersection multiplicity of $S$ with lines is $O(p^{3/2} \log p (\log \log p)^2)$.
	\end{theorem}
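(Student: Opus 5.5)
The idea is to read off each line's intersection size from the point count of an elliptic curve, and then let Corollary~\ref{cor:lenstra} bound how many lines can give the same count.

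\textbf{Non-vertical lines.} Take the line $v=mx+c$. Its intersection with $S$ is the set of $x\in\F_p$ for which $x^3-mx-c$ is a square, $0$ included. So
\[
n_\ell=\sum_{x\in\F_p}\frac{1+\chi(x^3-mx-c)+[x^3-mx-c=0]}{2}.
\]
The number of affine solutions of $y^2=x^3-mx-c$ is $\sum_x(1+\chi(x^3-mx-c))$. Hence
\[
n_\ell=\frac{|E_{-m,-c}(\F_p)|-1+r(m,c)}{2},
\]
where $r(m,c)\in\{0,1,2,3\}$ is the number of roots of the cubic $x^3-mx-c$ in $\F_p$. The key consequence is that $n_\ell=k$ forces $|E_{-m,-c}(\F_p)|\in\{2k+1-r: r=0,\dots,3\}$. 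This is a set $N_k$ of at most $4$ integers.

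\textbf{Bounding one value $k$.} Fix $k$ and count the lines with $n_\ell=k$. Split them as follows.
\begin{itemize}
\item Lines with $4m^3-27c^2\neq0$, i.e.\ with $4(-m)^3+27(-c)^2\neq 0$. These are exactly the nonsingular cases. The map $(m,c)\mapsto(-m,-c)$ is injective, and Hasse's theorem lets us restrict $N_k$ to integers within $2\sqrt p$ of $p+1$. Corollary~\ref{cor:lenstra} with $|N_k|\le4$ then bounds their number by $O(p^{3/2}\log p(\log\log p)^2)$.
\item Singular pairs $(m,c)$, where the discriminant vanishes. For each $m$ there are at most $2$ values of $c$, so there are at most $O(p)$ such lines in total.
\item Vertical lines $x=x_0$. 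There are only $p$ of them, each contributing trivially.
\end{itemize}
Summing the three parts gives $|L_k(S)|=O(p^{3/2}\log p(\log\log p)^2)$ uniformly in $k$, which proves the theorem.

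\textbf{Main obstacle.} The only delicate point is the bookkeeping in the identity for $n_\ell$. The root count $r$ must be handled carefully so that $N_k$ stays of bounded size, and the isomorphism-class weighting must be absorbed correctly. Both of these are already taken care of by Corollary~\ref{cor:lenstra}, so I expect no real difficulty.
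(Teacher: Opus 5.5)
Your proposal is correct and follows essentially the same route as the paper. Both proofs use the identity $|E_{-m,-c}(\mathbb{F}_p)| = 2n_\ell + 1 - r$, so that each intersection size corresponds to at most four point counts, then apply Corollary~\ref{cor:lenstra} and discard the $O(p)$ vertical and singular lines. Your version only adds detail the paper leaves implicit: the derivation of the identity, the restriction of $N_k$ to the Hasse interval, and the bound of at most two singular lines per slope.
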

	
	\begin{proof}
		Let $\ell$ be a non-vertical line $v = mx + b$ with $-4m^3 + 27b^2 \neq 0$. The intersection multiplicity $n_{\ell} = |S \cap \ell|$ is the number of $x \in \mathbb{F}_p$ for which $x^3 - mx - b$ is a quadratic residue. This relates to the rational points of the elliptic curve $E: Y^2 = X^3 - mX - b$ as follows:
		\[
		|E(\mathbb{F}_p)| = 2n_{\ell} + 1 - Z_{m,b},
		\]
		where $1$ accounts for the point at infinity and $Z_{m,b} \in \{0, 1, 2, 3\}$ is the number of roots of $X^3-mX-b = 0$. So a fixed intersection multiplicity $n_{\ell}$ can only occur if the
		associated elliptic curve has a total number of $\mathbb{F}_p$-rational points: $2n+1, 2n, 2n-1, 2n-2$.  
		Therefore the result now follows by Corollary \ref{cor:lenstra}, since the $O(p)$ cases of vertical or singular lines are negligible.
	\end{proof}

	\section[Legitimate colorings of linear hypergraphs]%\zsst{Applications and related problems}\zs
	{Legitimate colorings of linear hypergraphs}\label{sec:appl}
	
	Recall that  the \textit{ legitimate coloring of a hypergraph} is a coloring of the vertex set by $k$ colors such that all hyperedges are distinguishable by their color distribution, in other words, by their multiplicity list $c_1(H), c_2(H), \ldots, c_k(H)$, where $c_i(H)$ denotes the number of vertices in hyperedge $H$ colored with the $i$-th color. Let $\chi_L(H)$ denote the minimum integer $k$ such that there exists a legitimate
	coloring of a hypergraph $H$ by $k$  colors.
	Alon and Füredi proved that $5\le \chi_L(\Pi_q)\le 8$, provided that $q$ is sufficiently large \cite{Alon-Furedi}. 
	
	A naive lower bound on  the legitimate coloring number would follow from the following argument. Let us consider the line set $\cL$ of the projective plane $\Pi_q$. Then by Theorem \ref{leggyakoribb}, we find at least $\min_{ S\subseteq \Pi_q} \max_k \{|L_k(S)|\}$ lines which are identical in the first coordinate of the color multiplicity list.  In order to distinguish all the lines, we will need more colors, thus let us consider the largest non-distinguished line set $\cL'\subset \cL$ and taking into consideration the next color. We should keep introducing new colors at least up to the point when the largest undistinguished line set by the previously applied colors has more than one line.

	The next {proposition} would yield the last step of this procedure and  can be viewed as the legitimate version of the celebrated Erdős-Faber-Lovász conjecture. Recall that it states the following.
	
	\begin{conj}[Erdős-Faber-Lovász, \cite{erdos}]
		If a linear hypergraph $\HH=(V,   \mathcal{E})$ has $n$ edges, each of size at most $n$, then $\HH$ can be colored properly by $n$ colors.    
	\end{conj}
	
	This famous conjecture was fairly recently resolved in the case when $n$ is sufficiently large \cite{Kang}.
	
	\begin{proposition}
		Let $\mathcal{H}$ denote an arbitrarily chosen $n$-uniform linear hypergraph with $n$ edges.  Then the legitimate coloring number of  $\mathcal{H}$ is $2$.
	\end{proposition}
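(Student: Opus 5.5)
The statement amounts to the following. Let $E_1,\dots,E_n$ be the edges, so $|E_i|=n$ and $|E_i\cap E_j|\le 1$ for $i\ne j$. With two colors, the multiplicity list of an edge is $(c_1,n-c_1)$, so it is determined by its number $c_1(E_i)$ of red vertices. Hence it suffices to find a set $S$ of red vertices such that the $n$ numbers $|E_i\cap S|$ are pairwise distinct values in $\{0,1,\dots,n\}$. One color is not enough once $n\ge2$, since then every edge has list $(n)$. So the legitimate coloring number equals $2$ as soon as such an $S$ exists.

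The structural input is linearity. Each $E_i$ meets each of the other $n-1$ edges in at most one vertex, so $E_i$ has at least one \emph{private} vertex, i.e.\ one lying in no other edge. More precisely, when the edges are processed in some order, at most $i-1$ vertices of $E_i$ lie in earlier edges. I plan a greedy procedure along a well-chosen order $E_1,\dots,E_n$ with decreasing targets $t_i=n-i+1$, so that $E_1$ is entirely red. At step $i$ the vertices of $E_i\cap(E_1\cup\dots\cup E_{i-1})$ are already colored; let $r_i$ be the number of red ones. The remaining $f_i\ge n-i+1$ vertices of $E_i$ are free, and we color exactly $t_i-r_i$ of them red. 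This is feasible provided
\[
0\le t_i-r_i\le f_i .
\]
The upper inequality is automatic, since $f_i\ge n-i+1=t_i$. The whole difficulty is the lower inequality $r_i\le n-i+1$, which is only in danger when $i>n/2+1$.

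To keep $r_i$ small I will use the freedom in choosing which free vertices become red. At step $i$ we color red, in this order of preference:
\begin{itemize}
\item first the vertices of $E_i$ lying in no later edge, i.e.\ the private ones together with those shared only with earlier edges, which are already fixed anyway;
\item only then vertices shared with later edges.
\end{itemize}
Among vertices shared with later edges, we prefer those meeting edges that come as early as possible, because those edges have the largest targets. A free vertex of $E_i$ lying in a later edge is therefore red only if $E_i$ has fewer than $t_i-r_i$ private free vertices. I then want to choose the ordering so that edges with many private vertices come first. Concretely, I would order the edges by decreasing number of private vertices, or alternatively by increasing number of other edges met. I would then bound the number of red vertices a late edge $E_i$ inherits by a double count over the pairs $(j,i)$ with $j<i$ and $E_j\cap E_i$ red.

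As a fallback I would set up an induction on the number of edges for the stronger statement: \emph{every linear hypergraph with $k$ edges, each of size at least $k$, admits a $2$-coloring with pairwise distinct red counts.} In the inductive step we delete an edge $E$ having the most private vertices, color the rest by induction, and then place $E$. The red count of $E$ can be chosen anywhere in an interval of length $p_E\ge 1$ determined by its private vertices. If every value in this interval is already taken, we can either swap the two colors globally (which sends every count $x$ to $|E_i|-x$), or recolor a private vertex of the edge occupying the conflicting value.

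The main obstacle in either route is the same. We must guarantee that the late edges are not forced to have too many red vertices coming from their intersections with earlier edges, respectively that the small flexibility given by private vertices is enough to dodge the values already used. The extremal configuration, in which every two edges meet in a distinct point so that each edge has exactly one private vertex, is the test case. I would first verify the scheme there, by coloring the intersection points via a suitable tournament or ordering on the pairs $\{i,j\}$, and then try to extend the argument to the general case.
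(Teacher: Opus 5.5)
\textbf{There is a genuine gap: the decisive inequality is never proved.} Your reduction to finding $S$ with pairwise distinct $|E_i\cap S|$ is fine, and so is your check of the upper inequality $t_i-r_i\le f_i$. But the proposal stops exactly where the proof has to begin. The lower inequality $r_i\le n-i+1$ for $i>n/2+1$ is never established.

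The three devices you offer are heuristics with no argument attached: ordering by number of private vertices, preferring to make private vertices red, and the inductive ``swap colors or recolor a private vertex of the conflicting edge''. The last one can also cascade, since changing the count of the conflicting edge may create a new collision.

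There is also little reason to expect a local rule to suffice, because decreasing targets leave almost no slack. Take your own test case, where every two edges meet in a distinct vertex and each edge has one private vertex. Counting red incidences gives $2\sum_i r_i+P=\sum_i t_i=n(n+1)/2$, where $P\le n$ is the number of red private vertices. Hence $\sum_i r_i\ge n(n-1)/4$. The constraints $r_i\le\min(i-1,n-i+1)$ allow at most $\lfloor n^2/4\rfloor$ in total, so the total slack is at most $n/4$. Therefore at least $3n/4$ of the constraints must hold with equality.

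\textbf{The missing idea is to interleave the targets instead of making them decreasing.}
\begin{enumerate}
\item Fix any order. Aim for $t_i=n-\lfloor i/2\rfloor$ red vertices when $i$ is odd and $t_i=i/2$ when $i$ is even. These targets are $n,1,n-1,2,\dots$, that is, exactly $1,\dots,n$.
\item In a first pass, color every not-yet-colored vertex of $E_i$ red if $i$ is odd and blue if $i$ is even.
\item A blue vertex of an odd-indexed $E_i$ was first colored by an earlier even-indexed edge. By linearity, distinct such vertices come from distinct even $j<i$. So $E_i$ has at most $\lfloor i/2\rfloor$ blue vertices, hence at least $t_i$ red ones. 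Symmetrically, an even-indexed $E_i$ has at most $i/2$ red vertices.
\item The amount by which $E_i$ misses its target is the number of earlier opposite-parity edges lying in $D_i\cup C_i$. Here $D_i$ is the set of edges disjoint from $E_i$, and $C_i$ is the set of $j<i$ such that $E_j$ meets $E_i$ in a vertex already lying in some $E_k$ with $k<j$.
\item Let $R_i$ be the set of private vertices of $E_i$. Each of the $n-|R_i|$ non-private vertices of $E_i$ carries at least one of the other $n-1$ edges, and each $j\in C_i$ is an extra edge through such a vertex. Hence $|C_i|+|D_i|+1\le |R_i|$.
\item All private vertices of $E_i$ received the color assigned at step $i$. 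Recoloring the required number of them hits $t_i$ exactly.
\item These corrections do not interfere with one another, because a private vertex lies in no other edge.
\end{enumerate}
This is the paper's argument, and it needs no clever ordering at all.
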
 
	
	\begin{proof} Let the edges of $\mathcal H$ be ordered arbitrarily as
		$F_1,F_2,\dots,F_n$. We provide a suitable blue-red coloring of the vertex set in two phases.
		In the first phase we first construct a preliminary red-blue coloring. Color all vertices of
		$F_1$ blue. Suppose that the vertices of  $F_1,\dots,F_{i-1}$ have already been colored.
		If $i$ is odd, color every previously uncolored vertex of $F_i$ blue. If
		$i$ is even, color every previously uncolored vertex of $F_i$ red.
		
		We claim that after this first phase, every odd-indexed edge has blue
		majority and every even-indexed edge has red majority, in the following
		stronger sense:
		\[
		\ensuremath{n-\lfloor i/2\rfloor\le |F_i\cap B|}
		\quad\text{if } i \text{ is odd},
		\]
		and
		\[
		|F_i\cap B|\le i/2
		\quad\text{if } i \text{ is even},
		\] where $B$ indicates the set of blue vertices.
		Indeed, if $i$ is odd, then the only red vertices of $F_i$ can come from
		intersections with previous even-indexed edges. There are only
		$\lfloor i/2\rfloor$ such edges, and by linearity each contributes at most
		one vertex to $F_i$. Hence $F_i$ contains at most $\lfloor i/2\rfloor$ red
		vertices, and so at least $n-\lfloor i/2\rfloor$ blue vertices. The even
		case is analogous: if $i$ is even, then the only blue vertices of $F_i$
		can come from intersections with previous odd-indexed edges, of which
		there are $i/2$, so $F_i$ contains at most $i/2$ blue vertices.

		%we greedily color the vertices of the hypergraph blue and red as follows. Let us color all the vertices of $F_1$ blue.  Then, let us color the vertices of $F_i$ $(i>1)$ after  all the vertices of $\cup_{j<i} F_j$ are colored, such that the number of blue vertices in $F_i$ is $i/2$ or less if $i$ is even and $n-\lfloor i/2\rfloor $ or more if $i$ is odd, obtaining edges of blue and red majority alternately. This can be obtained since $F_i$  intersects at most $\lfloor i/2\rfloor$ hyperedges of blue majority from $\{F_j : j<i\}$ and $\lfloor (i-1)/2\rfloor$ hyperedges of red majority, we can precolor all the remaining vertices of $F_i$ blue if $i$ is odd and red if $i$ is even to fulfill the required condition. Indeed, such a coloring guarantees that a red vertex  appears on a hyperedge $F_i$ with blue majority only at most $\lfloor (i-1)/2\rfloor$ times. Similarly,  a {blue} vertex  appears on a hyperedge $F_i$ with {red} majority only at most $\lfloor i/2\rfloor$ times.\\
		In the second phase we wish to recolor a subset of the vertices to obtain the required property. For an edge
		$F_i$, we call a vertex $v\in F_i$ \textit{private} for $F_i$ if $v$ belongs to no
		edge of $\mathcal H$ other than $F_i$.
		We now modify the coloring so that
		\[
		|F_i\cap B|=
		\begin{cases}
			n-\lfloor i/2\rfloor, & i \text{ odd},\\
			i/2, & i \text{ even}
		\end{cases}
		\]
		holds for the number of blue vertices of the edges. 
		We do this successively for $i=1,2,\dots,n$. We show that this can be done via recoloring only private vertices in
		$F_i$.
		
		% so that $F_i$ contains exactly $i/2$ {blue} vertices if $i$ is even and $n-\lfloor i/2\rfloor$ if $i$ is odd. We guarantee this set of conditions consecutively on $F_1, F_2, \ldots, F_n$ by recoloring a subset of $F_i\setminus\cup_{j< i}F_j$ at step $i$.\\
		
		For a pair of intersecting edges $F_i, F_j$, let us call $F_j$ \textit{captured w.r.t. $F_i$}, if $\exists k<j$, such that $(F_i\cap F_j)\in F_k$. In other words,   if $F_i\cap F_j=\{v\}$, then $F_j$ is captured with respect
		to $F_i$ when the intersection vertex $v$ already appeared in an earlier
		edge. %It is easy to see that the coloring of the first phase yields  $$t_i=|\{j<i \colon F_j \text{ \ is \ captured \ w.r.t. \ } F_i\}|.$$ For a vertex $v$, 
		
		%For $v\in F_i$, define\[m_i(v):=|\{j: v\in F_j\}|.\]Thus $m_i(v)=1$ if and only if $v$ is private for $F_i$.
		
		Let $R_i$ denote the set of private vertices of $F_i$, let
		$$
		C_i:=\{j<i: F_j \text{ is captured with respect to } F_i\},$$
		and let us consider the disjoint edges w.r.t. $F_i$,
		$$
		D_i:=\{j:F_i\cap F_j=\emptyset\}.$$
		We claim that
		$
		\ensuremath{|C_i|+|D_i|+1\le |R_i|}
		$ holds. Indeed, since $\mathcal H$ is linear, every edge $F_j\neq F_i$ is either
		disjoint from $F_i$ or meets $F_i$ in exactly one vertex.  Every
		non-private vertex of $F_i$ accounts for at least one such intersecting
		edge, while every member of $C_i$ accounts for an additional one.
		%every edge $F_j\neq F_i$ is either disjoint from $F_i$ or meets $F_i$ and contributes exactly one incidence with a vertex of $F_i$. Therefore
		\begin{equation}
			n-1=|\{j\in[1,n] : j\neq i\}|\ge |D_i|+(n-|R_i|)+|\{j<i: F_j \text{ is captured with respect to } F_i\}|.   
		\end{equation}
		As $C_i \subseteq \{j: F_j \text{ is captured with respect to } F_i\}$, the inequality in turn follows.

		Suppose first that $i$ is odd.  After the first phase, $F_i$ has at least
		$n-\lfloor i/2\rfloor$ blue vertices. If it has $n-\lfloor i/2\rfloor+t_i$ blue vertices \((0\le t_i)\), then it has $\lfloor i/2\rfloor-t_i$
		red vertices. The red vertices of $F_i$ must lie in previous even-indexed
		edges, and the reason for a previous even edge $F_j$ failing to contribute a red
		vertex to $F_i$ is that either it is disjoint from $F_i$, or its
		intersection with $F_i$ has already been captured by an earlier edge. Hence the number of additional red vertices needed in $F_i$ is at most $|D_i|+|C_i|.$ Since $i$ is odd, all private vertices of $F_i$ were
		colored blue in the first phase. We may therefore recolor the required
		number of them from blue to red, as $
		\ensuremath{|C_i|+|D_i|+1\le |R_i|}
		$ holds.
		
		% $\sum_{v\in V} m_i(v)\le |F_i|+| \{j\neq i : j\in [1,n]\}|\le 2n-1$ 

		The case where $i$ is even is similar.
		Finally, the prescribed values \[ |F_i\cap B|= \begin{cases} n-\lfloor i/2\rfloor, & i \text{ odd},\\ i/2, & i \text{ even}, \end{cases} \] are precisely the integers \(1,2,\ldots,n\), in some order. Hence the edges have pairwise distinct color multiplicity lists, and the resulting 2-coloring is legitimate.
		%  In order to do so, observe that if an edge $F_i$ of majority blue contains more that  $n-\lfloor i/2\rfloor $ blue vertices by $t$, that means $F_i$ actually intersected the red-majority edges of index  less than $i$ in $t$ less vertices than the expected $\lfloor (i-1)/2\rfloor$. Hence, $F_i$ contains at least $t$ points of degree $1$ in $\cup_{j\le i}F_j$, due to the uniformity and the linearity of $\mathcal{H}$. The alternation of the color of $t$ of these $t$ points enables to set the number of blue colors on $F_i$ to the required value, for every odd value of $i$. The recoloring for the case of hyperedges of red majority, i.e., when $i$ is even, is similar. 
	\end{proof}

	\newpage
	
	\begin{appendices}
		\section[An elementary lower-bound argument for projective planes]{{An elementary lower-bound argument for projective planes}}
		{For completeness, we give an elementary argument that yields the same asymptotic lower-bound constant as Theorem~\ref{leggyakoribb}, with a slightly weaker lower-order term.}

		{We start by proving a variance formula for the secant sizes of an arbitrary point set.} This essentially originates from Bruen \cite{Br}.
		\begin{lemma}
			\label{lem:standard}
			Let $\Pi_q$ be a projective plane of order $q$  and let $S\subset \Pi_q$ be a set of points.
			{For each line $\ell\in \cL$, let $n_{\ell}$ denote the number of points of $S$ on $\ell$ and let} $\mu$ be the average number of points on a line, i.e., 
			$\mu: =  \frac{1}{q^2+q+1}\sum_{\ell\in \cL} n_{\ell}$.
			
			Then
			\[
			\sum_{\ell\in \cL}(n_{\ell}-\mu)^2 
			= q|S|\left(1-\frac{|S|}{q^2+q+1}\right).%q(|S| -\frac{|S|^2}{q^2+q+1}).
			%= |B|(|B|+n)-%\frac{|B|^2(n+1)^2}{N}.
			\]
		\end{lemma}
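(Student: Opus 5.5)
The plan is the standard double counting of Bruen-type equations, followed by a direct algebraic expansion of the sum of squares. Write $N=q^2+q+1$ and $s=|S|$. We need three counts.

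First, $\sum_{\ell\in\cL} 1 = N$, since the number of lines equals the number of points.

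Second, count incidences between points of $S$ and lines. Every point lies on exactly $q+1$ lines, so
\[
\sum_{\ell\in\cL} n_\ell=(q+1)s .
\]
Hence $\mu=(q+1)s/N$.

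Third, count ordered pairs of distinct points of $S$ together with the line joining them. Any two distinct points lie on a unique line, so
\[
\sum_{\ell\in\cL} n_\ell(n_\ell-1)=s(s-1).
\]
This is exactly the $2\text{-}(N,q+1,1)$ design identity already used in the proof of Theorem~\ref{thm:design-lower}.

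Next we expand:
\[
\sum_{\ell}(n_\ell-\mu)^2=\sum_\ell n_\ell^2-N\mu^2=s(s-1)+(q+1)s-\frac{(q+1)^2s^2}{N}.
\]
The first two terms simplify to $s^2+qs$. For the last term we use the identity $(q+1)^2=N+q$, which gives
\[
\frac{(q+1)^2s^2}{N}=s^2+\frac{qs^2}{N}.
\]
Substituting, the $s^2$ terms cancel and we are left with
\[
qs-\frac{qs^2}{N}=qs\left(1-\frac{s}{N}\right),
\]
which is the claim.

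The argument is entirely routine. The only step needing any care is the final simplification, where $(q+1)^2=N+q$ must be used to cancel the $s^2$ terms. No earlier results are needed beyond the axioms of a projective plane: each point lies on $q+1$ lines, two points determine a unique line, and there are $N$ lines.
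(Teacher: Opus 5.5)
Your proposal is correct and follows essentially the same route as the paper. Both arguments use the standard equations $\sum_\ell n_\ell=(q+1)|S|$ and $\sum_\ell n_\ell(n_\ell-1)=|S|(|S|-1)$, expand $\sum_\ell(n_\ell-\mu)^2=\sum_\ell n_\ell^2-N\mu^2$, and finish with the identity $(q+1)^2=N+q$.
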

		
		\begin{proof}  %{\zsst{Let $N$ denote the total number of lines of $\Pi_q$, i.e. $N= q^2+q+1$.}} 
			Let $N:=q^2+q+1$ denote the total number of lines.
			Let us recall the standard equations:
			\begin{equation}
				\sum_{\ell\in \cL} n_{\ell} = |S|(q+1), \ \
				\sum_{\ell\in \cL} n_{\ell}(n_{\ell}-1) = |S|(|S|-1).
			\end{equation}
			
			It follows that 
			\[
			\mu=\frac{\sum_{\ell\in \cL}n_{\ell}}{N}=\frac{|S|(q+1)}{N}
			\]
			and 
			\[
			\sum_{\ell\in \cL} n_{\ell}^2
			= |S|(|S|-1)+|S|(q+1)=|S|(|S|+q),
			\]
			hence
			\begin{equation}
				\sum_{\ell\in \cL}(n_{\ell}-\mu)^2
				=\sum_{\ell\in \cL}n_{\ell}^2-2\mu\sum_{\ell\in \cL}n_{\ell}+N\mu^2
				=\sum_{\ell\in \cL}n_{\ell}^2-N\mu^2
				=|S|(|S|+q)-\frac{|S|^2\,(q+1)^2}{N}.
			\end{equation}
			{Since \((q+1)^2=N+q\), the last expression equals
				\(q|S|(1-|S|/N)\), as required.}
			
		\end{proof}

		\begin{proposition}\label{prop:general_lower_bound}
			Let $\Pi_q$ be a projective plane of order $q$,   $N:=q^2+q+1$ denote the total number of lines,  and let $S$ be a set of points in $\Pi_q$. Then there exists an integer $k$ such that
			
			\[
			\frac{N^{3/2}}{\sqrt{12q|S|(1-\frac{|S|}{N}) + { {13N}} }}
			\le |L_k(S)|.
			\]
		\end{proposition}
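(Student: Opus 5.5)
Writing $M:=\max_k|L_k(S)|$ and $V:=q|S|(1-|S|/N)$, the plan is to rederive the Bobkov--Marsiglietti--Melbourne type bound by hand from Lemma~\ref{lem:standard}, losing only in the lower-order term (hence $13N$ in place of $N$). We compare the secant-size distribution with the most concentrated integer distribution having at most $M$ lines at each value. By Lemma~\ref{lem:standard} we have $\sum_{\ell}(n_\ell-\mu)^2=V$. The target inequality $N^{3/2}\le M\sqrt{12V+13N}$ is equivalent to
\[
12V+13N\ge N^3/M^2 .
\]

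The key step is a rearrangement bound. Order the lines by $|n_\ell-\mu|$. Each integer value is taken by at most $M$ lines. Only two integers lie at distance less than $1$ from $\mu$, and more generally only $2j$ integers lie within distance $j$ of $\mu$. Hence at most $2jM$ lines satisfy $|n_\ell-\mu|<j$, and for every $j\ge 1$ at least $N-2jM$ lines have $|n_\ell-\mu|\ge j$. Summing over $j$ gives
\[
V=\sum_\ell (n_\ell-\mu)^2\ \ge\ \sum_{j\ge1}(2j-1)\,\#\{\ell:|n_\ell-\mu|\ge j\}\ \ge\ \sum_{j=1}^{J}(2j-1)(N-2jM),
\]
where $J=\lfloor N/(2M)\rfloor$. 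An alternative that tracks the half-integer offset of $\mu$ is the pointwise bound $(n-\mu)^2\ge$ the squared distance to the $t$-th closest integer, which gives roughly $\sum_t (t/2)^2$ in blocks of $M$. Either way one gets $V\gtrsim M\sum_{t\le N/M}(t/2)^2\approx N^3/(12M^2)$.

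The remaining step is bookkeeping: evaluate $\sum_{j=1}^{J}(2j-1)(N-2jM)$ in closed form and absorb the floor error and the linear terms into $13N$. Writing $J=N/(2M)-\theta$ with $0\le\theta<1$, the leading term is $N^3/(12M^2)$. The error terms are of order $N^2/M$ and $N$. Using the trivial bound $M\ge N/(q+2)$, since there are only $q+2$ possible secant sizes, the error can be checked to be at most $\frac{13}{12}N$. This step is the main obstacle, because a crude estimate of the $N^2/M$ term may not fit. If it fails, I would instead use the sharper pointwise comparison with the distance to the $t$-th nearest integer, which gives the clean bound $12V\ge N^3/M^2-O(N)$, with the constant then checked explicitly against $13$.
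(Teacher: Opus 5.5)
There is a genuine gap. Your main route cannot give the $13N$ inequality, and your fallback is missing the one idea that makes it work.

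\textbf{The main route falls short.} This is not a bookkeeping issue. Write $N=2MJ+r$ with $0\le r<2M$. Your sum evaluates exactly to
\[
\sum_{j=1}^{J}(2j-1)(N-2jM)=NJ^2-\tfrac13MJ(J+1)(4J-1)=\tfrac23MJ^3-MJ^2+\tfrac13MJ+rJ^2 .
\]
The target $\frac{N^3}{12M^2}-\frac{13N}{12}$ equals $\tfrac23MJ^3+rJ^2+O(MJ)$. For $r=0$, your bound minus the target is $N(\tfrac54-\tfrac J2)$, which is negative as soon as $J\ge 3$, i.e.\ $M\le N/6$.

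The missing term $MJ^2\approx N^2/(4M)$ is of order $N\sqrt q$ in the relevant regime $M\asymp N/\sqrt q$. The estimate $M\ge N/(q+2)$ only caps it by roughly $N(q+2)/4$, so it cannot be absorbed into $13N$. The loss is built into the step $x^2\ge\lfloor x\rfloor^2$, which discards an amount of order $|n_\ell-\mu|$ on each line.

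\textbf{The fallback needs the pairing step.} Your fallback, comparing with the nearest integers and filling greedily, is the paper's route. What your sketch lacks is this. Write $\mu=\lfloor\mu\rfloor+\alpha$. The $(2j+1)$-st and $(2j+2)$-nd nearest integers are $\lfloor\mu\rfloor-j$ and $\lfloor\mu\rfloor+1+j$, at distances $j+\alpha$ and $j+1-\alpha$. These two must be treated as a pair:
\[
(j+\alpha)^2+(j+1-\alpha)^2\ge 2\left(j+\tfrac12\right)^2 .
\]

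Bounding each distance separately by an $\alpha$-free quantity does not suffice. The best such bound is $(s-1)/2$ for the $s$-th nearest integer, and it again loses about $Mt^2/2\approx N^2/(8M)$. Your heuristic $\sum_s (s/2)^2$ is not a lower bound at all, since the nearest distance can be $0$.

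With the pairing, write $N=2Mt+r$ with $0\le r<2M$. Fill the $2t$ nearest values with $M$ lines each; the remaining $r$ lines lie at distance at least $t$. This gives $V\ge\frac{Mt(4t^2-1)}{6}+rt^2$. Then the identity
\[
\frac{Mt(4t^2-1)}{6}+rt^2-\frac{N^3}{12M^2}+\frac{13N}{12}=\frac{6Mt(4M^2-r^2)+r(13M^2-r^2)}{12M^2}\ge 0
\]
yields the claim. The pairing is a midpoint-rule estimate, which is exactly why no $N^2/M$ term survives.
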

		\begin{proof}
			Let $M$ denote the maximum cardinality of the sets of $k$-secants,
			\[
			M:=\max_{k\in\{0,\dots,q+1\}} |L_k(S)|.
			\]   Let us bound from below the variance \[V:=
			\sum_{\ell\in \cL}(n_{\ell}-\mu)^2
			=q|S|\left(1-\frac{|S|}{N}\right),
			\] where $\mu=\frac{\sum_{\ell\in \cL}n_{\ell}}{N}=\frac{|S|(q+1)}{N}$ is the average intersection size.
			Write $\mu=\lfloor\mu\rfloor+\alpha$ with $\alpha\in[0,1)$.
			For each integer \(j\ge 0\), consider the pairs 
			$$U_j:=\{
			\lfloor\mu\rfloor-j,\lfloor\mu\rfloor+1+j
			\}.$$
			Their squared distances from $\mu$ satisfy
			\[
			\ensuremath{2\left(j+\frac12\right)^2
				\le 2\left(j+\frac12\right)^2+2\left(\alpha-\frac12\right)^2
				=(\alpha+j)^2+(1-\alpha+j)^2}.
			\]
			Define $t$ and $r$ by the Euclidean division
			\[
			N=2Mt+r,\qquad 0\le r<2M.
			\]
			Then the variance is minimized when $|L_k|=M$ for $k\in \bigcup_{j=0}^{t-1} U_j$
			and $\sum_{k\in U_{t}} |L_k|=r$. Observe that $(\alpha+t)^2$ and  $(1-\alpha+t)^2$ are both at least $t^2$.
			Hence
			
			\[
			2M\sum_{j=0}^{t-1}\left(j+\frac12\right)^2
			+
			rt^2 \le V.
			\]
			
			Using
			\[
			2\sum_{j=0}^{t-1}\left(j+\frac12\right)^2
			= \frac{1}{2}\sum_{j=0}^{t-1}(2j+1)^2= 
			\frac{t(4t^2-1)}{6},
			\]
			
			we get
			\[
			\frac{Mt(4t^2-1)}{6}
			+
			rt^2 \le V.
			\]
			Recalling that \(r<2M\), a straightforward calculation shows that
			\[
			\frac{Mt(4t^2-1)}{6}
			+
			rt^2
			-
			\left(\frac{N^3}{12M^2}-\frac{13N}{12}\right)
			=
			\ensuremath{\frac{ 6Mt(4M^2-r^2)  + r(13M^2- r^2)}{12M^2} \ge 0}.
			\]
			Therefore
			\[
			\frac{N^3}{12M^2}-\frac{13N}{12} \le V,
			\]
			which implies
			\[
			\frac{N^{3/2}}{\sqrt{12V+13N}} \le M.
			\]
			Substituting $
			V=q|S|\left(1-\frac{|S|}{N}\right)$
			gives the required bound.\end{proof}
		
		\begin{corollary}
			\label{cor:gen lower bound}
			Let $\Pi_q$ be a projective plane of order $q$ and let $S$ be a set of points in $\Pi_q$. Then there exists an integer $k$ such that
			
			\[
			\frac{q^2+q+1}{\sqrt{3q+13}}
			\le |L_k(S)|
			\] 
		\end{corollary}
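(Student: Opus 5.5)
The plan is to derive the corollary directly from Proposition~\ref{prop:general_lower_bound}, by bounding the only $S$-dependent quantity in its denominator uniformly over all point sets. Write $N=q^2+q+1$ and $s=|S|$. Proposition~\ref{prop:general_lower_bound} provides an integer $k$ with
\[
\frac{N^{3/2}}{\sqrt{12qs(1-s/N)+13N}}\le |L_k(S)|.
\]
The left-hand side decreases as the variance term $V=qs(1-s/N)$ grows. So it suffices to find the maximum of $V$ over all possible sizes $0\le s\le N$.

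The first step treats $s(1-s/N)$ as a concave quadratic in $s$. Its maximum is attained at $s=N/2$ and equals $N/4$. For integer $s$ the value is at most this, so $V\le qN/4$ for every $S$. Substituting into the denominator gives
\[
12V+13N\le 3qN+13N=N(3q+13).
\]

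The second step simplifies the resulting bound:
\[
|L_k(S)|\ge \frac{N^{3/2}}{\sqrt{N(3q+13)}}=\frac{N}{\sqrt{3q+13}}.
\]
This is exactly the claimed bound $\frac{q^2+q+1}{\sqrt{3q+13}}\le |L_k(S)|$.

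There is no genuine obstacle here. The only point to check is the direction of monotonicity: a larger variance weakens the bound, so taking the maximal $V$ yields a statement valid for every $S$. It is also worth noting that the empty set and $S=\cP$ give $V=0$, which is consistent with the trivial bound $|L_0|=N$ or $|L_{q+1}|=N$.
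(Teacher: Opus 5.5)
Your proposal is correct and is essentially the paper's own proof: the paper also deduces the corollary directly from Proposition~\ref{prop:general_lower_bound}, using the AM--GM bound $|S|(1-|S|/N)\le N/4$ so that the denominator becomes $\sqrt{N(3q+13)}$. Your monotonicity check is the only point that needs verifying, and you handle it correctly.
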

		
		\begin{proof} The proof follows from Proposition \ref{prop:general_lower_bound} after applying the AM-GM inequality  
			$$ N\frac{|S|}{N}(1-\frac{|S|}{N})\le N\frac{1}{4},$$%, where $N=q^2+q+1$,} 
			
			which in turn gives
			\[
			\frac{N^{3/2}}{\sqrt{3qN+13N}} \le |L_k(S)|
			.
			\] 
		\end{proof}
		{This gives the same leading lower-bound constant as Theorem~\ref{leggyakoribb}.}
	\end{appendices}

\end{document}